\newcommand{\Z}{\mathbb{Z}}
\newcommand{\R}{\mathbb{R}}
\newcommand{\Q}{\mathbb{Q}}
\newcommand{\C}{\mathbb{C}}
\newcommand{\F}{\mathbb{F}}
\DeclareSymbolFont{cyrletters}{OT2}{wncyr}{m}{n}
\DeclareMathSymbol{\Sha}{\mathalpha}{cyrletters}{"58}
\newtheorem{theorem}{Theorem}[section]
\newtheorem{proposition}[theorem]{Proposition}
\newtheorem*{proposition*}{Proposition}
\newtheorem{conjecture}[theorem]{Conjecture}
\theoremstyle{definition}
\newtheorem{remark}[theorem]{Remark}
\title[On the Iwasawa theory of elliptic curves at Eisenstein primes]{On the Iwasawa theory of elliptic curves at\\ Eisenstein primes}
\author{Francesc Castella}
\address{Department of Mathematics, University of California, Santa Barbara, CA 93106, USA}
\email{castella@ucsb.edu}
\date{\today}
\begin{document}

\maketitle

\begin{abstract}
These are expanded notes for the mini-course given by the author at the 2022 ICTS workshop `Elliptic curves and the special values of $L$-functions'.
\end{abstract}

\tableofcontents

\newcommand{\fp}{\mathfrak{p}}
\newcommand{\tH}{\text{H}}
\newcommand{\bZ}{\mathbb{Z}}
\newcommand{\bQ}{\mathbb{Q}}
\newcommand{\xrightarrowdbl}[2][]{%
  \xrightarrow[#1]{#2}\mathrel{\mkern-14mu}\rightarrow
}


\section*{Introduction} 


Let $E/\Q$ be an elliptic curve and let $L(E,s)$ be its Hasse--Weil $L$-series. The latter is defined by an Euler product absolutely convergent for complex $s$ in the right-half plane ${\rm Re}(s)>3/2$, but by modularity 
it can be analytically continued to all $s\in\C$.

By the Mordell--Weil theorem, the group of rational points $E(\Q)$ is finitely generated, so
\[
E(\Q)\simeq\Z^{r}\oplus E(\Q)_{\rm tors},
\]
for some $r={\rm rank}_{\Z}E(\Q)\geq 0$. 
The Birch--Swinnerton-Dyer conjecture (BSD) is the statement that
\[
{\rm ord}_{s=1}L(E,s)\overset{?}={\rm rank}_{\Z}E(\Q).
\]
After the groundbreaking works of Coates--Wiles, Rubin, Gross--Zagier, and Kolyvagin in the 1970s and 1980s, the conjecture is known when either $L(E,1)\neq 0$ or $L'(E,1)\neq 0$. In these cases, their results also establish \emph{finiteness} of the Tate--Shafarevich group
\[
\Sha(E/\Q):={\rm ker}\biggl\{{\rm H}^1(\Q,E)\rightarrow\prod_v{\rm H}^1(\Q_v,E)\biggr\},
\]
a statement that is also widely believed to hold in general.

More recently, further progress on the BSD conjecture, and on its refined form predicting an exact formula for the leading Taylor coefficient of $L(E,s)$ around $s=1$ in terms of arithmetic invariants of $E$, has been obtained largely through the use of $p$-adic methods; more specifically, through various incarnations of Iwasawa theory. More specifically, a large body of work has gone into the proof of the following three implications, which are expected to hold for any prime number $p$:

\begin{enumerate}
    \item \emph{$p$-part of the BSD formula in analytic rank $0$}:  
    \[
    L(E,1)\neq 0\quad\Longrightarrow\quad{\rm ord}_p\biggl(\frac{L(E,1)}{\Omega_E}\biggr)={\rm ord}_p\biggl(\frac{\#\Sha(E/\bQ)\cdot{\rm Tam}(E/\Q)}{(\#E(\Q)_{\rm tors})^2}\biggr),
    \]
where $\Omega_E$ is the positive N\'{e}ron period of $E$, and ${\rm Tam}(E/\Q)=\prod_{\ell\mid N}c_\ell(E/\Q)$ is the product of the Tamagawa factors of $E/\Q$. 
    \item \emph{$p$-converse to the theorem of Gross--Zagier and Kolyvagin}:
    \[
    {\rm corank}_{\Z_p}{\rm Sel}_{p^\infty}(E/\Q)=1\quad\Longrightarrow\quad{\rm ord}_{s=1}L(E,s)=1,
    \]
where ${\rm Sel}_{p^\infty}(E/\Q)$ 
is the $p^\infty$-Selmer group fitting into the descent exact sequence
\[
0\rightarrow E(\Q)\otimes\Q_p/\Z_p\rightarrow{\rm Sel}_{p^\infty}(E/\Q)\rightarrow\Sha(E/\Q)[p^\infty]\rightarrow 0.
\]
    \item \emph{$p$-part of the BSD formula in analytic rank $1$}:   
    \[
    {\rm ord}_{s=1}L(E,s)=1\quad\Longrightarrow\quad{\rm ord}_p\biggl(\frac{L'(E,1)}{\Omega_E\cdot{\rm Reg}_E}\biggr)={\rm ord}_p\biggl(\frac{\#\Sha(E/\bQ)\cdot{\rm Tam}(E/\Q)}{(\#E(\Q)_{\rm tors})^2}\biggr),
    \]
    where ${\rm Reg}_E$ is the regulator of the N\'{e}ron--Tate canonical height pairing on $E(\Q)\otimes\R$.
\end{enumerate}

The goal of these lectures is to explain the proof of (1)--(3) for good ordinary primes, with a special emphasis in the case of (the most recently established) \emph{Eisenstein primes $p$}, i.e. primes $p$ for which $E$ admits a rational $p$-isogeny, or equivalently, such that $E[p]$ is reducible as a $G_{\Q}$-module. 
\newline


\emph{Acknowledgements.} It is a pleasure to heartily thank the organizers of the 2022 ICTS workshop `Elliptic curves and the special values of $L$-functions'---Ashay Burungale, Haruzo Hida, Somnath Jha, and Ye Tian---for their invitation to deliver these lectures, and the opportunity to contribute to these proceedings. The author was partially supported by the NSF grant DMS-2101458.


\section{Lecture 1: Main conjectures and applications}

The purpose of this lecture is to explain how, for any \emph{good ordinary prime} (either Eisenstein or not) the implications (1), (2), and (3) from the Introduction follow from certain (three different, but not completely unrelated) ``main conjectures'' in Iwasawa theory. 

\subsection{Mazur's main conjecture}

Let $p>2$ be a good ordinary prime for $E$. Let $\Q(\mu_{p^\infty})$ be the field obtained by adjoining to $\Q$
of $p$-power roots of unity; then 
\[
{\rm Gal}(\Q(\mu_{p^\infty})/\Q)=\Delta\times\Gamma 
\]
with $\Delta\simeq{\rm Gal}(\Q(\mu_p)/\Q)$ a cyclic group of order $p-1$, and $\Gamma\simeq\Z_p$. Let $\Q_\infty/\Q$ be the cyclotomic $\Z_p$-extension of $\Q$, defined as the fixed of $\Q(\mu_{p^\infty})$ by $\Delta$.

For every $n\geq 0$, denote by $\Q_n$ the unique subfield of $\Q_\infty$ with $[\Q_n:\Q]=p^n$. Let ${\rm Sel}_{p^\infty}(E/\Q_n)$ be the usual $p^\infty$-Selmer group, defined as
\[
{\rm Sel}_{p^\infty}(E/\Q_n)={\rm ker}\biggl\{{\rm H}^1(\Q_n,E[p^\infty])\rightarrow\prod_v{\rm H}^1(\Q_v,E)\biggr\},
\]
where $v$ runs over all primes of $\Q$, and put ${\rm Sel}_{p^\infty}(E/\Q_\infty)=\varinjlim_n{\rm Sel}_{p^\infty}(E/\Q_n)$.

The following is a special case of Mazur's control theorem (which applies to abelian varieties defined over a number field $F$ more generally, and arbitrary $\Z_p$-extensions $F_\infty/F$).

\begin{theorem}[Mazur]\label{thm:mazur-control-Q}
The restriction maps
\[
{\rm Sel}_{p^\infty}(E/\Q_n)\rightarrow{\rm Sel}_{p^\infty}(E/\Q_\infty)^{{\rm Gal}(\Q_\infty/\Q_n)}
\]
have finite kernel and cokernel, of order bounded as $n\rightarrow\infty$.
\end{theorem}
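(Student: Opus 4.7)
The plan is to compare the Selmer group at finite level with the $\Gamma_n$-invariants of the limit Selmer group via the snake lemma, writing $\Gamma_n = \mathrm{Gal}(\Q_\infty/\Q_n)$. Concretely, I would fit the restriction map $s_n\colon\mathrm{Sel}_{p^\infty}(E/\Q_n)\to\mathrm{Sel}_{p^\infty}(E/\Q_\infty)^{\Gamma_n}$ into the commutative diagram whose rows are the defining exact sequences
\[
0\to\mathrm{Sel}_{p^\infty}(E/\Q_n)\to\tH^1(\Q_n,E[p^\infty])\xrightarrow{\;\lambda_n\;}\prod_v\tH^1(\Q_{n,v},E),
\]
\[
0\to\mathrm{Sel}_{p^\infty}(E/\Q_\infty)^{\Gamma_n}\to\tH^1(\Q_\infty,E[p^\infty])^{\Gamma_n}\xrightarrow{\;\lambda_\infty^{\Gamma_n}\;}\Bigl(\prod_w\tH^1(\Q_{\infty,w},E)\Bigr)^{\Gamma_n},
\]
connected by restriction maps $h_n$ (global) and $g_n=\prod_vg_{n,v}$ (local), where $w$ runs over places of $\Q_\infty$ above $v$. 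The snake lemma then bounds $\ker(s_n)$ by $\ker(h_n)$ and $\mathrm{coker}(s_n)$ by a subquotient of $\ker(g_n)$, so it suffices to prove that both $\ker(h_n)$ and $\ker(g_n)$ are finite and of bounded order as $n\to\infty$.

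For the global map, inflation--restriction identifies $\ker(h_n)=\tH^1(\Gamma_n,E(\Q_\infty)[p^\infty])$. Since $\Q_\infty/\Q$ is a $\Z_p$-extension linearly disjoint from the (pro-finite over $\Z_p$) trivializing extension of $E[p^\infty]$ away from a finite layer, $E(\Q_\infty)[p^\infty]$ is finite; consequently $\tH^1(\Gamma_n,E(\Q_\infty)[p^\infty])$ is finite, and in fact of order bounded independently of $n$ (using that $\Gamma_n$ is procyclic and the module is finite, so the Herbrand quotient argument applies). This gives the required control over $\ker(s_n)$.

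For the local maps, I would treat the places one at a time. At any $v$ where $E$ has good reduction and $v\nmid p$, the extension $\Q_{\infty,w}/\Q_{n,v}$ is unramified, and a standard computation using the N\'eron model shows $g_{n,v}$ is injective (so $\ker(g_{n,v})=0$) whenever $v$ does not split completely in $\Q_\infty/\Q$, while for the finitely many remaining $v$ one reduces to controlling $\tH^1(\Gamma_{n,w},E(\Q_{\infty,w}))$, a finite group whose order can be bounded in terms of the Tamagawa number $c_v(E/\Q_{n,v})$ and the $p$-part of $E(\Q_{\infty,w})$, both bounded uniformly in $n$. The only nontrivial place is $v=p$: here $\Q_{\infty,w}/\Q_p$ is a totally ramified $\Z_p$-extension, and in the good ordinary case the filtration $0\to C\to E[p^\infty]\to E[p^\infty]/C\to 0$ coming from the formal group/\'etale quotient, combined with the fact that $\tH^0(\Q_{\infty,w},E[p^\infty]/C)$ and the corresponding formal-group term are finite, shows that $\ker(g_{n,p})$ is finite and of order bounded as $n\to\infty$.

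The step I expect to be the main obstacle is the local analysis at $v=p$: controlling $\ker(g_{n,p})$ requires using the ordinarity hypothesis in an essential way, since without the filtration the kernel can fail to be finite. Once this ordinary local computation and the bookkeeping over the (finitely many relevant) other places are in hand, assembling the snake lemma yields the uniform bounds asserted in the theorem.
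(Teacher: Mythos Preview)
The paper does not actually prove this theorem: immediately after the statement it simply refers the reader to Mazur's original article \cite{mazur-towers} and to Greenberg's Cetraro notes \cite{greenberg-cetraro}. Your sketch is precisely the Greenberg approach---the snake-lemma comparison of the two defining sequences, with inflation--restriction handling the global kernel and a prime-by-prime analysis (trivially for good $v\nmid p$, via Tamagawa numbers for bad $v$, and via the ordinary filtration at $v=p$) handling the local kernels---so in that sense you are reproducing exactly one of the two proofs the paper cites.

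Two small points of imprecision are worth flagging, though neither is a real gap. First, your justification for the finiteness of $E(\Q_\infty)[p^\infty]$ is vague; the cleanest way to see it (and the way Greenberg does) is to note that if it were infinite then $\Q_\infty$ would contain $\Q(E[p^\infty])$, which is impossible since the latter is not abelian over $\Q$ (non-CM case) or is a different $\Z_p$-extension (CM case). Second, in the cyclotomic $\Z_p$-extension of $\Q$ no prime $\ell\neq p$ splits completely, so your ``finitely many remaining $v$'' that split completely is an empty set; the only primes requiring the Tamagawa bound are the bad-reduction primes, of which there are finitely many to begin with. With these cosmetic fixes your outline is a faithful summary of the standard proof.
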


The original proof of Theorem~\ref{thm:mazur-control-Q} can be found in \cite{mazur-towers}; an alternative and highly influential proof of the same result is given (for elliptic curves) in \cite{greenberg-cetraro}.


Let $\Lambda=\Z_p[[\Gamma]]$ be the cyclotomic Iwasawa algebra. It follows easily from Theorem~\ref{thm:mazur-control-Q} together with the weak Mordell--Weil theorem, that ${\rm Sel}_{p^\infty}(E/\Q_\infty)$ is cofinitely generated over $\Lambda$, i.e. the Pontryagin dual
\[
X(E/\Q_\infty)
:={\rm Hom}_{\Z_p}({\rm Sel}_{p^\infty}(E/\Q_\infty),\Q_p/\Z_p)
\]
is finitely generated over $\Lambda$. Mazur further conjectured that $X(E/\Q_\infty)$ is $\Lambda$-torsion (see Conjecture~\ref{conj:mazur-IMC} below), a condition that can be easily verified (using a topological version of Nakayama's lemma) when the classical Selmer group ${\rm Sel}_{p^\infty}(E/\Q)$ is finite (so in particular, $E(\Q)$ is finite), but which lies much deeper in general.

On the analytic side, 
using modular symbols (assuming $E$ is parametrized by modular functions) Mazur and Swinnerton-Dyer \cite{M-SwD} attached to $E$ a $p$-adic $L$-function $\mathcal{L}_p^{\rm MSD}(E/\Q)\in\Lambda\otimes\Q_p$ characterized by the property that for every finite order character $\chi:\Gamma\rightarrow\mu_{p^\infty}$ :
\begin{equation}\label{eq:MSD-interp}
\mathcal{L}_p^{\rm MSD}(E/\Q)(\chi)=\begin{cases}
(1-\alpha_p^{-1})^2\cdot\frac{L(E,1)}{\Omega_E}&\textrm{if $\chi=1$,}\\[0.2em]
\frac{p^n}{\tau(\overline{\chi})\alpha_p^n}\cdot\frac{L(E,\overline{\chi},1)}{\Omega_E}&\textrm{if ${\rm cond}(\chi)=p^n>1$,}
\end{cases}
\end{equation}
where $\alpha_p$ is the $p$-adic unit root of $x^2-a_p(E)x+p$ and $\tau(\overline{\chi})$ is the Gauss sum.

Motivated by Iwasawa's main conjecture for class groups of number fields, Mazur formulated the following (see \cite[\S{9.5}, Conj.~3]{M-SwD}). Note that implicit in 
the conjecture is the statement that $\mathcal{L}_p^{\rm MSD}(E/\Q)$ is integral, i.e. lies in $\Lambda$. 

\begin{conjecture}[Mazur's main conjecture]\label{conj:mazur-IMC}
$X(E/\Q_\infty)$ is $\Lambda$-torsion, with
\[
{\rm char}_\Lambda(X(E/\Q_\infty))=\bigl(\mathcal{L}_p^{\rm MSD}(E/\Q)\bigr).
\]
\end{conjecture}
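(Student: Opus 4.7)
The plan is to prove the two divisibilities separately, combining Kato's Euler system (for one) with Eisenstein congruences on a unitary group (for the other), in the manner of Kato and of Skinner--Urban.

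\emph{First divisibility (via Kato).} I would construct the Beilinson--Kato Euler system using Siegel units on the modular curves $Y_1(Np^n)$, producing norm-compatible classes in $\tH^1(\Q(\mu_{Np^n}),T_pE)$ whose local behaviour at $p$ is controlled by a Coleman-type map. Kato's explicit reciprocity law then identifies the image of these classes under a Perrin-Riou-style big logarithm with $\mathcal{L}_p^{\rm MSD}(\E)$. Feeding the Euler system into the Kolyvagin--Rubin machinery bounds $X(\E_\infty)$ from above, simultaneously showing that it is $\Lambda$-torsion and establishing the divisibility
\[
\bigl(\mathcal{L}_p^{\rm MSD}(\E)\bigr)\sq {\rm char}_\Lambda(X(\E_\infty)).
\]

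\emph{Second divisibility (via Eisenstein congruences).} I would realize $E$ as a member of a Hida family, and on the unitary group $\mathrm{GU}(2,2)$ construct a $p$-adic Klingen--Eisenstein family whose constant term encodes $\mathcal{L}_p^{\rm MSD}(\E)$. Congruences between this Eisenstein family and cuspidal Hida families, via a lattice construction in the spirit of Ribet's proof of the converse to Herbrand's theorem, produce Galois representations carrying nontrivial Selmer classes annihilated by $\mathcal{L}_p^{\rm MSD}(\E)$; bounding $X(\E_\infty)$ from below yields
\[
{\rm char}_\Lambda(X(\E_\infty))\sq \bigl(\mathcal{L}_p^{\rm MSD}(\E)\bigr).
\]
Combined with Kato's divisibility and the fact that $\Lambda$ is a UFD up to pseudo-isomorphism, this gives the desired equality of characteristic ideals.

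\emph{The main obstacle.} The Skinner--Urban step is built on the residual representation $E[p]$ being irreducible as a $G_\Q$-module, which fails precisely at Eisenstein primes --- the focus of these notes. In that setting the lattice argument collapses, and one must work with a larger deformation space (typically a three-variable Iwasawa algebra incorporating both a Hida direction and the cyclotomic and anticyclotomic variables over an auxiliary imaginary quadratic field), then leverage the two-variable main conjecture known via Rubin's Euler system of elliptic units, and carefully track characteristic ideals under specialization back to $\Lambda$. Carrying out this gluing --- and matching the resulting period with $\Omega_E$ so that the specialized $p$-adic $L$-function really is $\mathcal{L}_p^{\rm MSD}(\E)$ --- is the principal technical difficulty, and is what the remaining lectures will develop.
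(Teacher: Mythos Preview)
Your first divisibility via Kato is exactly what the paper uses (with W\"uthrich's integrality refinement), so that half is fine.

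The gap is in the second half. You correctly flag that the Skinner--Urban lattice construction on $\mathrm{GU}(2,2)$ collapses when $E[p]$ is reducible, but your proposed repair --- enlarging the deformation space and ``gluing'' via Rubin's main conjecture --- is not what the paper does, and as stated it is too vague to constitute a strategy. The paper does \emph{not} attempt to salvage the Eisenstein-congruence/lattice argument at all. Instead, the opposite divisibility is obtained by an entirely different route:
\begin{enumerate}
\item First prove the \emph{anticyclotomic} BDP main conjecture over an auxiliary imaginary quadratic $K$ (Theorem~\ref{thm:CGLS}). This is where Rubin's IMC for $K$ and the reducibility of $E[p]$ are actually exploited, via a Greenberg--Vatsal style comparison of $\mu$- and $\lambda$-invariants with Katz $p$-adic $L$-functions, combined with a Heegner point Kolyvagin system argument with controlled error terms.
\item Then transfer from anticyclotomic to cyclotomic using \emph{Beilinson--Flach classes} (not $\mathrm{GU}(2,2)$ Eisenstein series) and their two explicit reciprocity laws, one at $v$ linking to $\mathcal{L}_v^{\rm BDP}$ and one at $\overline{v}$ linking to $\mathcal{L}_p^{\rm PR}$.
\item Finally, twist by an anticyclotomic character $\alpha\equiv 1\pmod{p^M}$ with $M\gg 0$ to avoid the forced vanishing at the trivial character imposed by \eqref{eq:heeg}, and read off the equality of cyclotomic $\mu$- and $\lambda$-invariants from the resulting congruences mod $p^M$.
\end{enumerate}
The key conceptual point you are missing is that the method is designed to be \emph{insensitive to the value of $\mu$}: rather than proving $\mu=0$ (which can fail here, by Greenberg and Stevens), one shows the algebraic and analytic $\mu$-invariants agree by comparing both to a twisted situation where the full main conjecture is already known. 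None of Skinner--Urban's unitary-group machinery enters.
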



As usual, we identify the Iwasawa algebra $\Lambda$ with the one-variable power series ring $\Z_p[[T]]$ upon the choice of a topological generator $\gamma\in\Gamma$ by setting $T=\gamma-1$. Under this identification, the evaluation  of  an element $\mathcal{L}\in\Lambda$ at a character $\chi:\Gamma\rightarrow\C_p^\times$ corresponds to the specialization of the power series expression of $\mathcal{L}$ at $T=\chi(\gamma)-1$. In particular, evaluation at 
$\chi=1$ corresponds to specialization at $T=0$. 

Henceforth we shall use $a\sim_p b$ to denote the equality $a=ub$ with $u\in\Z_p$.

\begin{proposition}\label{prop:IMC-BSD0}
Assume Conjecture~\ref{conj:mazur-IMC}. Then the $p$-part of the BSD formula holds in analytic rank $0$, i.e.
\[
L(E,1)\neq 0\quad\Longrightarrow\quad{\rm ord}_p\biggl(\frac{L(E,1)}{\Omega_E}\biggr)={\rm ord}_p\biggl(\frac{\#\Sha(E/\bQ)\cdot{\rm Tam}(E/\Q)}{(\#E(\Q)_{\rm tors})^2}\biggr).
\]
\end{proposition}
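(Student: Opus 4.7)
The plan is to evaluate both sides of the claimed equality in Conjecture~\ref{conj:mazur-IMC} at the trivial character $\chi=1$, which under the identification $\Lambda\simeq\Z_p[[T]]$ corresponds to specialization at $T=0$, and then leverage Mazur's control theorem to translate the resulting identity of $p$-adic valuations into the BSD formula.

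First I would use the interpolation formula \eqref{eq:MSD-interp} to compute
\[
\mathcal{L}_p^{\rm MSD}(E/\Q)(0) \;=\; (1-\alpha_p^{-1})^2\cdot\frac{L(E,1)}{\Omega_E},
\]
which is nonzero by hypothesis. Granting Conjecture~\ref{conj:mazur-IMC}, any generator $F(T)$ of ${\rm char}_\Lambda(X(E/\Q_\infty))$ therefore satisfies $F(0)\neq 0$, so $X(E/\Q_\infty)^\Gamma$---and equivalently the quotient $X(E/\Q_\infty)_\Gamma=X(E/\Q_\infty)/TX(E/\Q_\infty)$---is finite. A standard consequence of the structure theorem for $\Lambda$-modules, applied modulo the maximal pseudo-null submodule, then yields
\[
{\rm ord}_p(F(0)) \;=\; {\rm length}_{\Z_p}\bigl(X(E/\Q_\infty)_\Gamma\bigr) - {\rm length}_{\Z_p}\bigl(X(E/\Q_\infty)^\Gamma\bigr).
\]

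Next I would feed this into Mazur's control theorem (Theorem~\ref{thm:mazur-control-Q}) to identify the right-hand side with invariants of ${\rm Sel}_{p^\infty}(E/\Q)$. Dualizing the restriction map and analyzing its kernel and cokernel via a Poitou--Tate local-to-global calculation expresses the difference above as ${\rm length}_{\Z_p}({\rm Sel}_{p^\infty}(E/\Q))$ plus a sum of local contributions. At primes $\ell\neq p$ of bad reduction these contributions recover the $p$-parts $c_\ell^{(p)}$ of the Tamagawa numbers; at $p$ the local contribution produces exactly the factor $(1-\alpha_p^{-1})^2$, together with a correction by $\#\widetilde{E}(\F_p)[p^\infty]^2$ that, combined with $\#E(\Q)[p^\infty]$, assembles into the denominator $(\#E(\Q)_{\rm tors})^2$. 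Since $L(E,1)\neq 0$ forces $E(\Q)$ to be finite (so that ${\rm Sel}_{p^\infty}(E/\Q)$ and $\Sha(E/\Q)[p^\infty]$ have the same order), the two $(1-\alpha_p^{-1})^2$ factors cancel and the asserted $p$-part of the BSD formula drops out.

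The hardest part will be the precise local bookkeeping, especially at $p$. One must verify that $X(E/\Q_\infty)$ has no nonzero finite $\Lambda$-submodule, so that the algebraic identity relating ${\rm ord}_p(F(0))$ to $X(E/\Q_\infty)_\Gamma$ and $X(E/\Q_\infty)^\Gamma$ applies cleanly, and one must execute the Euler characteristic / local Tate-duality calculation carefully so that the Euler factor $(1-\alpha_p^{-1})^2$ really matches on the nose. In the Eisenstein setting this is genuinely more delicate than in the generic case: because $E[p]$ is reducible, both $E(\Q)_{\rm tors}$ and $\widetilde{E}(\F_p)$ can acquire nontrivial $p$-torsion, and the Greenberg--Schneider-type local factor at $p$ must be tracked with care in order for the global torsion denominator to come out correctly.
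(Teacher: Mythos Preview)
Your proposal is correct and follows essentially the same route as the paper: evaluate both sides at $T=0$, use the interpolation formula on the analytic side, and use an Euler characteristic computation on the algebraic side to relate $F(0)$ to $\#{\rm Sel}_{p^\infty}(E/\Q)$, the Tamagawa factors, the torsion, and the local factor $(1-\alpha_p^{-1})^2$. The only difference is packaging: the paper invokes the formula
\[
\mathcal{F}(E/\Q_\infty)(0)\sim_p(1-\alpha_p^{-1})^2\cdot\#{\rm Sel}_{p^\infty}(E/\Q)\cdot\frac{{\rm Tam}(E/\Q)}{(\#E(\Q)_{\rm tors})^2}
\]
as a black box from Schneider and Perrin-Riou, whereas you sketch its derivation (structure theorem, control theorem, local Poitou--Tate bookkeeping).
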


\begin{proof}
Suppose $L(E,1)\neq 0$. Then $\mathcal{L}_p^{\rm MSD}(E/\bQ)(0)\neq 0$ by the interpolation property. By Mazur's main conjecture, it follows that the $\Gamma$-coinvariants $X(E/\Q_\infty)_\Gamma$ are finite, and so $\#{\rm Sel}_{p^\infty}(E/\Q)<\infty$ by Pontryagin duality and Mazur's control theorem. In particular, $\#{\rm Sel}_{p^\infty}(E/\Q)=\#\Sha(E/\Q)[p^\infty]$.

Let $\mathcal{F}(E/\Q_\infty)\in\Lambda$ be a characteristic power series of $X(E/\Q_\infty)$, i.e. a generator of the principal ideal ${\rm char}_\Lambda(X(E/\Q_\infty))$. Then by the work of Schneider \cite{schneider-II} and Perrin-Riou \cite{PR-92} one has 
\begin{equation}\label{eq:euler-char-Q}
\mathcal{F}(E/\Q_\infty)(0)\sim_p(1-\alpha_p^{-1})^2\cdot\#{\rm Sel}_{p^\infty}(E/\Q)\cdot\frac{{\rm Tam}(E/\Q)}{(\#E(\Q)_{\rm tors})^2}.
\end{equation}
Since by Conjecture~\ref{conj:mazur-IMC} the left-hand side of \eqref{eq:euler-char-Q} has the same $p$-adic valuation as $\mathcal{L}_p^{\rm MSD}(E/\Q)(0)$, the combination of \eqref{eq:MSD-interp} and \eqref{eq:euler-char-Q} yields the result.
\end{proof}



\subsection{Perrin-Riou's main conjecture}

We keep the assumption that $p$ is an odd prime of good ordinary reduction for $E$. 
Let $K/\Q$ be an imaginary quadratic field satisfying the following \emph{Heegner hypothesis}:
\begin{equation}\label{eq:heeg}
\textrm{every prime $\ell\vert N$ splits in $K$.}\tag{Heeg}
\end{equation}
Let $K_\infty^-/K$ be the anticyclotomic $\Z_p$-extension,  characterized as the unique $\Z_p$-extension of $K$ that is Galois over $\Q$ with $\tau\sigma\tau=\sigma^{-1}$ for all $\sigma\in{\rm Gal}(K_\infty^-/K)$, where $\tau$ is the non-trivial automorphism of $K/\Q$. Let $K_n^-$ be the unique subextension of $K_\infty^-$ with $[K_n^-:K]=p^n$. 

Via a fixed modular parametrization
\[
\varphi:X_0(N)\rightarrow E,
\]
the Kummer images of Heegner points of $p$-power conductor
yield classes
\[
x_n\in{\rm Sel}(K_n^-,T_pE):=\varprojlim_m{\rm Sel}_{p^m}(E/K_n^-),
\]
where $T_pE$ is the $p$-adic Tate module of $E$. Using the ordinary hypotheses on $p$, these classes can be made compatible under the corestriction maps ${\rm Sel}(K_{n+1}^-,T_pE)\rightarrow {\rm Sel}(K_n^-,T_pE)$, hence yielding an element
\[
\kappa_\infty^{\rm Hg}\in\check{S}(E/K_\infty^-):=\varprojlim_n{\rm Sel}(K_n^-,T_pE).
\]

Denote by $X(E/K_\infty^-)$ the Pontryagin dual of ${\rm Sel}_{p^\infty}(E/K_\infty^-)$; this  is  a finitely generated module over the anticyclotomic Iwasawa algebra $\Lambda^-=\Z_p[[\Gamma^-]]$, where we put $\Gamma^-={\rm Gal}(K_\infty^-)/K)$.

\begin{conjecture}[Perrin-Riou's main conjecture]\label{conj:PR-IMC}
$X(E/K_\infty^-)$ has $\Lambda^-$-rank $1$, with
\[
{\rm char}_{\Lambda^-}(X(E/K_\infty^-)_{\rm tors})={\rm char}_{\Lambda^-}\biggl(\frac{\check{S}(E/K_\infty^-)}{\Lambda^-\cdot\kappa_\infty^{\rm Hg}}\biggr)^2\cdot\frac{1}{u_K^2c^2},
\]
where the subscript ${\rm tors}$ denotes the maximal $\Lambda^-$-torsion submodule, $u_K:=\frac{1}{2}\#(\mathcal{O}_K^\times)$, and $c\in\Q^\times$ is the Manin constant\footnote{Thus $\varphi^*\omega_E=c\cdot 2\pi if(z)dz$ for the N\'eron differential $\omega_E$ and the newform $f$ attached to $E$.} attached to $\varphi$.
\end{conjecture}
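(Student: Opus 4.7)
The strategy is to establish the two divisibilities of characteristic ideals separately---one from the Heegner-point Euler system, the other from the BDP anticyclotomic $p$-adic $L$-function---and to bridge them via the explicit reciprocity law of Bertolini--Darmon--Prasanna. A separate bootstrap is needed to handle the Eisenstein case, which is the principal difficulty.

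For the divisibility of $\text{char}_{\Lambda^-}(X(E/K_\infty^-)_{\rm tors})$ by (the square of) the Heegner-point index, I would invoke Howard's $\Lambda^-$-adic extension of Kolyvagin's method. The essential input is the non-triviality of $\kappa_\infty^{\rm Hg}$ in $\check{S}(E/K_\infty^-)$, which follows from the Cornut--Vatsal non-vanishing of Heegner points along $K_\infty^-$. Howard's Kolyvagin-system machinery then yields simultaneously that $X(E/K_\infty^-)$ has $\Lambda^-$-rank exactly $1$ and that
\[
{\rm char}_{\Lambda^-}(X(E/K_\infty^-)_{\rm tors})\;\Big|\;{\rm char}_{\Lambda^-}\biggl(\frac{\check{S}(E/K_\infty^-)}{\Lambda^-\cdot\kappa_\infty^{\rm Hg}}\biggr)^2,
\]
with the factor $1/(u_K^2c^2)$ arising from integrality normalizations of $\varphi$ against the N\'eron differential $\omega_E$.

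For the reverse divisibility, I would introduce the BDP $p$-adic $L$-function $\mathcal{L}_{\rm BDP}\in\Lambda^-$ (attached to a choice of prime of $K$ above $p$) and the BDP main conjecture, which predicts ${\rm char}_{\Lambda^-}X^{\rm BDP}(E/K_\infty^-)=(\mathcal{L}_{\rm BDP})$ for a Greenberg-type Selmer group with strict local condition at one of the primes above $p$. The BDP explicit reciprocity law identifies the image of $\kappa_\infty^{\rm Hg}$ under the Perrin-Riou regulator at that prime with $\mathcal{L}_{\rm BDP}$ up to a $\Lambda^-$-unit. Poitou--Tate duality, comparing the two Selmer structures, naturally introduces the square and absorbs the local Euler factors, converting the BDP main conjecture into the opposite divisibility in Perrin-Riou's formula; the two divisibilities then combine into equality.

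The hardest step is the Eisenstein case. Both Howard's Euler-system bound (which requires large residual image) and Wan's proof of the BDP main conjecture (which requires residual irreducibility in the ambient Hida family) lose their sharpness when $E[p]$ is reducible. Writing $E[p]^{\rm ss}\simeq\F_p(\phi)\oplus\F_p(\psi)$ with $\phi\psi=\omega$, I would follow the approach of Castella--Grossi--Lee--Skinner: apply the classical cyclotomic main conjecture for the characters $\phi$ and $\psi$ (Rubin's theorem in the CM range and Mazur--Wiles/Kato otherwise), and combine it with Eisenstein congruences in a suitably chosen residually reducible Hida family to recover the missing divisibility in the BDP main conjecture. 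The most delicate point is the Iwasawa-theoretic bookkeeping needed to ensure that the Tamagawa-type error terms cancel exactly, preserving the precise constant $1/(u_K^2c^2)$ in the final equality.
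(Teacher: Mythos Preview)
First, a framing point: in the paper this statement is a \emph{conjecture}, not a theorem. The paper establishes it only under the hypotheses of Theorem~\ref{thm:CGLS} (good Eisenstein $p$ with $\phi\vert_{G_p}\neq 1,\omega$), so your proposal should be read against that proof. At the architectural level you have the right picture: one divisibility from a Heegner-point Kolyvagin system, the equivalence with the BDP main conjecture via the $\Lambda^-$-adic BDP formula and Poitou--Tate duality (the paper's Proposition~\ref{prop:equiv}), and the Eisenstein case as the crux.

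However, your treatment of the Eisenstein case misidentifies the actual inputs, and this is a genuine gap. You propose to feed in the \emph{cyclotomic} main conjecture for $\phi,\psi$ over $\Q$ (``Mazur--Wiles/Kato'') via congruences in a residually reducible Hida family. The paper does neither. Its Step~1 is an \emph{anticyclotomic} Greenberg--Vatsal argument: restrict $\phi,\psi$ to $G_K$, invoke Rubin's main conjecture for the imaginary quadratic field $K$ together with Hida's $\mu=0$ for anticyclotomic Katz $p$-adic $L$-functions, and use Kriz's congruence $\mathcal{L}_{v,S}^{\rm BDP}(f/K)^2\equiv \mathcal{L}_{v,S}^{\rm Katz}(\Phi)\cdot\mathcal{L}_{v,S}^{\rm Katz}(\Psi)\pmod{p}$ to deduce $\mu=0$ and the $\lambda$-equality for the BDP side. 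No Hida families enter here, and the cyclotomic main conjecture over $\Q$ plays no role.

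Separately, Step~2 is not recovered from the character main conjectures at all: the Howard-type divisibility must be \emph{re-proved} in the residually reducible setting by a refined Kolyvagin-system argument (Proposition~\ref{prop:KS-1}) that tracks explicit ``error terms'' at each height-one prime $\mathfrak{P}$ of $\Lambda^-$, showing they are $O(1)$ away from $(\gamma-1)$ and handling $(\gamma-1)$ by a distinct induction exploiting complex conjugation. The paper then combines Step~1 (equality of $\mu$ and $\lambda$) with Step~2 (one divisibility in $\Lambda^{\rm ur}[1/p]$) to force equality in the BDP main conjecture, and transports it to Perrin-Riou's via Proposition~\ref{prop:equiv}. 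Your plan conflates these two logically independent steps into a single congruence argument and invokes the wrong (cyclotomic rather than anticyclotomic) main conjecture, so as written it would not close.
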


\begin{proposition}\label{prop:HPMC-pGZK}
Assume Conjecture~\ref{conj:PR-IMC}. Then  
\[
{\rm corank}_{\Z_p}{\rm Sel}_{p^\infty}(E/\Q)=1\quad\Longrightarrow\quad{\rm ord}_{s=1}L(E,s)=1,
\]
i.e. the $p$-converse to the theorem of Gross--Zagier and Kolyvagin holds.
\end{proposition}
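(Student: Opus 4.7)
The approach is to transfer the rank-$1$ hypothesis from $\Q$ to a well-chosen imaginary quadratic field $K$, use Perrin-Riou's main conjecture to force the bottom-layer Heegner point to be non-torsion, and then invoke Gross--Zagier on the analytic side.

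First, I choose an imaginary quadratic field $K$ in which every prime $\ell\mid N$ splits (so \eqref{eq:heeg} holds) and such that the twist $L$-value $L(E^K,1)$ is non-zero; the simultaneous realisation of both conditions is guaranteed by the nonvanishing results of Bump--Friedberg--Hoffstein, Murty--Murty, and Waldspurger. Since $p>2$, one has
\[
\mathrm{Sel}_{p^\infty}(E/K)\simeq\mathrm{Sel}_{p^\infty}(E/\Q)\oplus\mathrm{Sel}_{p^\infty}(E^K/\Q),
\]
and Kato's Euler system gives $\corank_{\Z_p}\mathrm{Sel}_{p^\infty}(E^K/\Q)=0$ from $L(E^K,1)\neq 0$. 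Combined with the hypothesis $\corank_{\Z_p}\mathrm{Sel}_{p^\infty}(E/\Q)=1$, this upgrades the corank assumption to $\corank_{\Z_p}\mathrm{Sel}_{p^\infty}(E/K)=1$. By the anticyclotomic analogue of Theorem~\ref{thm:mazur-control-Q}, the $\Gamma^-$-coinvariants $X(E/K_\infty^-)_{\Gamma^-}$ then have $\Z_p$-rank exactly $1$.

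Next, Conjecture~\ref{conj:PR-IMC} asserts $\rank_{\Lambda^-}X(E/K_\infty^-)=1$, so the $\Lambda^-$-free part alone already contributes $\Z_p$-rank $1$ to the coinvariants; matching against the previous paragraph forces any characteristic power series of $X(E/K_\infty^-)_{\tors}$ to be non-vanishing at $T=0$. Feeding this into the characteristic ideal identity of Conjecture~\ref{conj:PR-IMC}, the torsion $\Lambda^-$-module $\check{S}(E/K_\infty^-)/\Lambda^-\kappa_\infty^{\mathrm{Hg}}$ likewise has finite $\Gamma^-$-coinvariants. A Mazur-type control argument for the compact Selmer group $\check{S}(E/K_\infty^-)$ identifies $\check{S}(E/K_\infty^-)_{\Gamma^-}$ (up to finite index) with $\mathrm{Sel}(K,T_pE)$, and sends $\kappa_\infty^{\mathrm{Hg}}$ to a non-zero scalar multiple of the Kummer image of the classical Heegner point $y_K\in E(K)$; the finiteness just established then forces $y_K$ to have infinite order.

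Finally, the Gross--Zagier formula $\hat{h}(y_K)\doteq L'(E/K,1)$ (up to a non-zero explicit factor) yields $L'(E/K,1)\neq 0$. The Heegner hypothesis forces the sign of the functional equation of $L(E/K,s)$ to equal $-1$, so $L(E/K,1)=0$; combined with $L(E/K,s)=L(E,s)L(E^K,s)$ and $L(E^K,1)\neq 0$ this gives $L(E,1)=0$ and $L'(E,1)=L'(E/K,1)/L(E^K,1)\neq 0$, whence $\ord_{s=1}L(E,s)=1$. The most delicate step I anticipate is the anticyclotomic control step for $\check{S}(E/K_\infty^-)$ at $T=0$: the anticyclotomic setting is considerably more subtle than its cyclotomic counterpart---particularly when $p$ splits in $K$ and both primes above $p$ contribute non-trivially to the local conditions---and one must verify that the controlled kernels and cokernels do not swamp the rank-$1$ Heegner contribution under specialisation at the bottom of the tower.
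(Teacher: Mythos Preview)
Your argument is correct and follows essentially the same route as the paper: choose $K$ satisfying \eqref{eq:heeg} with $L(E^K,1)\neq 0$, use Kato to reduce to $\corank_{\Z_p}\mathrm{Sel}_{p^\infty}(E/K)=1$, apply anticyclotomic control plus Conjecture~\ref{conj:PR-IMC} to force the bottom-layer Heegner class to be non-torsion, and conclude via Gross--Zagier and the factorisation $L(E/K,s)=L(E,s)L(E^K,s)$. Your write-up is in fact slightly more detailed than the paper's in citing the nonvanishing theorems for the choice of $K$ and in spelling out the sign argument at the end; the caution you flag about anticyclotomic control at $T=0$ is reasonable, though note that only \eqref{eq:heeg} (not \eqref{eq:spl}) is needed here.
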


\begin{proof}
Suppose ${\rm corank}_{\Z_p}{\rm Sel}_{p^\infty}(E/\Q)=1$, and choose an imaginary quadratic field $K$ such that:
\begin{itemize}
\item[(i)] Hypothesis $\eqref{eq:heeg}$ holds;
\item[(ii)] $L(E^K,1)\neq 0$, 
\end{itemize}
where $E^K/\Q$ is the twist of $E$ by the quadratic character corresponding to $K$. By Kato's work \cite{kato-euler-systems}, condition (ii) implies that $\#{\rm Sel}_{p^\infty}(E^K/\Q)<\infty$, and so
\[
{\rm corank}_{\Z_p}{\rm Sel}_{p^\infty}(E/K)={\rm corank}_{\Z_p}{\rm Sel}_{p^\infty}(E/\Q)=1.
\]
By a variant of Theorem~\ref{thm:mazur-control-Q} for the extension $K_\infty^-/K$, it follows that the ${\rm corank}_{\Z_p}(X(E/K_\infty^-)_{\Gamma^-})=1$. By Conjecture~\ref{conj:PR-IMC}, this implies that 
\[
(\gamma-1)\nmid{\rm char}_{\Lambda^-}\biggl(\frac{\check{S}(E/K_\infty^-)}{\Lambda^-\cdot\kappa_\infty^{\rm Hg}}\biggr), 
\]
where $\gamma\in\Gamma^-$ is any topological generator 
(otherwise one would get ${\rm corank}_{\Z_p}(X(E/K_\infty^-)_{\Gamma^-})\geq 3$), and so $\kappa_\infty^{\rm Hg}$ has non-torsion image $\kappa_0^{\rm Hg}$ under the natural map
\[
\check{S}(E/K_\infty^-)\twoheadrightarrow\check{S}(E/K_\infty^-)_{\Gamma^-}\hookrightarrow{\rm Sel}(K,T_pE).
\]
But by construction $\kappa_0^{\rm Hg}$ is the Kummer image of the classical Heegner point $y_K\in E(K)$ in the Gross--Zagier formula \cite{grosszagier}, and therefore $L'(E/K,1)\neq 0$. Finally, the factorization $L(E/K,s)=L(E,s)L(E^K,s)$ together with condition (ii) implies that ${\rm ord}_{s=1}L(E,s)=1$, as desired. 
\end{proof}

\begin{remark}
The first general $p$-converse to the theorem of Gross--Zagier and Kolyvagin for good ordinary primes $p$ is due to Skinner \cite{pCONVskinner} and independently W.\,Zhang \cite{wei-zhang}. The above proof of Proposition~\ref{prop:HPMC-pGZK} is closely related to the approach in \cite{pCONVskinner} and is essentially contained in the work of X.\,Wan \cite{wan-heegner}, which by using the Iwasawa theory of Heegner points (and their ensuing $\Lambda^-$-adic extension of the BDP formula \cite{cas-hsieh1}) allows one to dispense with the assumption  $\#\Sha(E/\Q)[p^\infty]<\infty$ forces upon by the original approach.
\end{remark}


\subsection{BDP main conjecture}

In this section we assume that, in addition to \eqref{eq:heeg}, the imaginary quadratic field $K$ satisfies the condition that
\begin{equation}\label{eq:spl}
\textrm{$(p)=v\overline{v}$ splits in $K$},\tag{spl}
\end{equation}
with $v$ the prime of $K$ above $p$ induced by our fixed embedding $\overline{\Q}\hookrightarrow\overline{\Q}_p$. On the other hand, the condition that $p$ is a prime of good ordinary reduction for $E$ is \emph{not} necessary here.

Put $\Lambda^{\rm ur}:=\Lambda^-\hat\otimes_{\Z_p}\Z_p^{\rm ur}$, where $\Z_p^{\rm ur}$ is the completion of the ring of integers of the maximal unramified extension of $\Q_p$. By the work of Bertolini--Darmon--Prasanna \cite{BDP} and its $\Lambda^-$-adic extension in \cite{brakocevic,cas-hsieh1}, there is a $p$-adic $L$-function $\mathcal{L}_v^{\rm BDP}(f/K)\in\Lambda^{\rm ur}$ characterized by the property that for every character $\chi:\Gamma^-\rightarrow\C_p^\times$ crystalline at both $v$ and $\overline{v}$ of weights $(n,-n)$ with $n>0$ we have
\[
\mathcal{L}_v^{\rm BDP}(f/K)^2(\chi)=C(f/K,\chi)\cdot L^{\rm alg}(f/K,\chi,1),
\]
where $C(f/K,\chi)$ is a nonzero term depending on $f/K$ and $\chi$, and $L^{\rm alg}(f/K,\chi,1)$ is the ``algebraic part'' of the central Rankin--Selberg $L$-value $L(f/K,\chi,1)$.

On the algebraic side, define the \emph{BDP Selmer group} by
\[
{\rm Sel}_v^{\rm BDP}(E/K_\infty^-):={\rm ker}\biggl\{{\rm H}^1(K_\infty^-,E[p^\infty])\rightarrow\prod_{w\nmid v}{\rm H}^1(K_{\infty,w}^-,E[p^\infty])\biggr\}.
\]
In particular, classes in ${\rm Sel}_v^{\rm BDP}(E/K_\infty^-)$ are trivial at the primes above $\overline{v}$. Denote by $X_v^{\rm BDP}(E/K_\infty^-)$ the Pontryagin dual of ${\rm Sel}_v^{\rm BDP}(E/K_\infty^-)$.

The following can be viewed as a special case of Greenberg's Iwasawa main conjectures \cite{greenberg-motives} for $p$-adic deformations of motives.

\begin{conjecture}[BDP main conjecture]\label{conj:BDP-IMC}
$X_v^{\rm BDP}(E/K_\infty^-)$ is $\Lambda^-$-torsion, with
\[
{\rm char}_{\Lambda^-}(X_v^{\rm BDP}(E/K_\infty^-))=\bigl(\mathcal{L}_v^{\rm BDP}(f/K)^2\bigr)
\]
as ideals in $\Lambda^{\rm ur}$.
\end{conjecture}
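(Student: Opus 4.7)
The plan is to establish Conjecture~\ref{conj:BDP-IMC} in the good ordinary case by showing it is equivalent to Perrin-Riou's main conjecture (Conjecture~\ref{conj:PR-IMC}), via an explicit reciprocity law identifying the image of $\kappa_\infty^{\rm Hg}$ under a local Coleman/Perrin-Riou map at $v$ with the BDP $p$-adic $L$-function $\mathcal{L}_v^{\rm BDP}(f/K)$. Once that equivalence is in place, either direction of the main conjecture will be reduced to corresponding statements about Heegner classes.

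First, I would set up the Iwasawa-theoretic Poitou-Tate nine-term exact sequence for the Galois representation $T_pE$ over $K_\infty^-$. Under the splitting hypothesis \eqref{eq:spl} and the ordinary assumption, the Greenberg local conditions at $v$ and $\overline{v}$ are encoded by the unique $G_{\Q_p}$-stable line in $T_pE$. The BDP Selmer group imposes the strict (trivial) condition at $v$ and the relaxed/unramified-quotient condition at $\overline{v}$, whereas the Selmer group defining $X(E/K_\infty^-)$ uses the ordinary condition at both places. Comparing these local conditions via Tate local duality over $K_{\infty,v}^-$ fits $X_v^{\rm BDP}(E/K_\infty^-)$, $X(E/K_\infty^-)$, $\check{S}(E/K_\infty^-)$, and the unramified quotient of the local Iwasawa cohomology at $v$ into a four-term exact sequence of $\Lambda^-$-modules, producing a characteristic-ideal identity that relates the three modules on the left.

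Second, I would invoke the $\Lambda^-$-adic explicit reciprocity law of Castella-Hsieh (extending Brakocevi\'c and Bertolini-Darmon-Prasanna): the image of $\res_v(\kappa_\infty^{\rm Hg})$ in the unramified-quotient cohomology, read off against the canonical differential $\omega_E$, equals $\mathcal{L}_v^{\rm BDP}(f/K)$ up to an explicit unit involving $u_K$ and the Manin constant $c$. Substituting this into the Poitou-Tate identity converts Perrin-Riou's expression ${\rm char}_{\Lambda^-}(\check{S}(E/K_\infty^-)/\Lambda^-\cdot\kappa_\infty^{\rm Hg})^2\cdot(u_K c)^{-2}$ into $(\mathcal{L}_v^{\rm BDP}(f/K)^2)$, yielding the desired equality of characteristic ideals (with the square on both sides matching naturally).

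The main obstacle is then establishing the two divisibilities in Perrin-Riou's conjecture itself. The divisibility $\mathcal{L}_v^{\rm BDP}(f/K)^2\mid{\rm char}_{\Lambda^-}(X_v^{\rm BDP})$ I expect to extract from the Heegner-point Euler system (in its $\Lambda^-$-adic form due to Howard and Wan) together with the reciprocity law above. The reverse divisibility is the truly deep step: in the non-Eisenstein case it proceeds via a Skinner-Urban-type Eisenstein congruence argument on a definite/indefinite quaternionic Hida family, while in the Eisenstein setting that is the focus of these lectures the reducibility of $\overline{\rho}_{E,p}$ forces a genuinely new analysis---one must descend along the two Jordan-H\"older characters, control the characteristic ideals of the corresponding residual Selmer groups via Iwasawa-theoretic main conjectures for Dirichlet characters, and then reassemble the pieces using a careful study of the Eisenstein ideal in the relevant Hecke algebra.
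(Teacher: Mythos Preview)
The statement you are addressing is a \emph{conjecture}; the paper does not prove it in general, and there is no ``paper's own proof'' of the full statement to compare against. What the paper does establish is Theorem~\ref{thm:CGLS}, which is this conjecture under the Eisenstein hypothesis $E[p]^{ss}\simeq\F_p(\phi)\oplus\F_p(\psi)$ with $\phi\vert_{G_p}\neq 1,\omega$. Your proposal is best read as a sketch toward that theorem.

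You correctly identify the equivalence with Perrin-Riou's main conjecture via the $\Lambda^-$-adic BDP formula (this is Proposition~\ref{prop:equiv} in the paper), and that one divisibility comes from the Heegner-point Kolyvagin system. However, your description of how the argument closes is structurally off. You frame the remaining work as establishing the \emph{reverse divisibility} directly, by Skinner--Urban methods in the irreducible case and by an Eisenstein-ideal/Hecke-algebra analysis in the reducible case. The paper does \emph{not} prove a second divisibility. Instead it runs an anticyclotomic Greenberg--Vatsal argument (\S\ref{subsec:GV}): the congruence \eqref{eq:Eis-congr} is used to show that both $\mu$-invariants vanish and that the $\lambda$-invariants of $X_v^{\rm BDP}(E/K_\infty^-)$ and $\mathcal{L}_v^{\rm BDP}(f/K)^2$ agree, by reducing to the anticyclotomic Iwasawa theory of the characters $\phi,\psi$, where Hida's $\mu=0$ result and Rubin's main conjecture for $K$ apply. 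Once the Iwasawa invariants match, the single Euler-system divisibility automatically upgrades to equality. Your phrase ``reassemble the pieces using a careful study of the Eisenstein ideal in the relevant Hecke algebra'' does not correspond to anything in the paper's argument; the inputs are Katz $p$-adic $L$-functions for characters of $K$ (not cyclotomic Dirichlet main conjectures) together with Rubin's theorem, and no Hecke-algebra congruence module enters.

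There is also a genuine gap on the Euler-system side that you pass over. In the Eisenstein case the standard big-image hypotheses fail, so Howard's $\Lambda^-$-adic Kolyvagin machinery does not apply out of the box. The paper's Proposition~\ref{prop:KS-1} (\S\ref{subsec:Kolyvagin}) supplies a new Kolyvagin-system argument ``with error terms'', handling height-one primes $\mathfrak{P}\neq(p)$ by specialization and treating the augmentation ideal $(\gamma-1)$ separately via complex conjugation. Without this, even the divisibility you attribute to Howard--Wan is not available in the residually reducible setting.
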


\begin{proposition}\label{prop:BDP-BSD-1}
Suppose the $p$-part of the BSD formula holds in analytic rank $0$. Then Conjecture~\ref{conj:BDP-IMC} implies the $p$-part of the BSD formula in analytic rank $1$, i.e.
\[
{\rm ord}_{s=1}L(E,s)=1\quad\Longrightarrow\quad{\rm ord}_p\biggl(\frac{L'(E,1)}{\Omega_E\cdot{\rm Reg}_E}\biggr)={\rm ord}_p\biggl(\frac{\#\Sha(E/\bQ)\cdot{\rm Tam}(E/\Q)}{(\#E(\Q)_{\rm tors})^2}\biggr).
\]    
\end{proposition}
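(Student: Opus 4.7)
My approach parallels that of Proposition~\ref{prop:IMC-BSD0}, replacing Mazur's main conjecture and the Schneider--Perrin-Riou Euler characteristic formula at $T=0$ by the BDP main conjecture (Conjecture~\ref{conj:BDP-IMC}), the BDP $p$-adic Waldspurger formula at the trivial character (which lies outside the range of interpolation), and an Euler characteristic / control theorem for the BDP Selmer group. First, using a Friedberg--Hoffstein-style nonvanishing result I would pick an auxiliary imaginary quadratic field $K$ satisfying both \eqref{eq:heeg} and \eqref{eq:spl} together with $L(E^K,1)\neq 0$. The factorization $L(E/K,s)=L(E,s)L(E^K,s)$ and the hypothesis ${\rm ord}_{s=1}L(E,s)=1$ give ${\rm ord}_{s=1}L(E/K,s)=1$, while the assumed rank-$0$ $p$-part of BSD applied to $E^K/\Q$ pins down ${\rm ord}_p(L(E^K,1)/\Omega_{E^K})$ in the desired BSD-shaped form. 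This reduces matters to establishing the ``$E/K$-version'' of the rank-$1$ $p$-part of BSD.

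I would then specialize Conjecture~\ref{conj:BDP-IMC} at the trivial character. On the analytic side, the BDP formula at $T=0$ reads
\[
\mathcal{L}_v^{\rm BDP}(f/K)(1)\sim_p\frac{(1-\alpha_p^{-1})^2}{u_Kc}\,\log_{\omega_E}(P_K),
\]
where $P_K\in E(K)$ is the (suitably normalized) Heegner point of conductor $1$. On the arithmetic side, I would establish a BDP analogue of \eqref{eq:euler-char-Q}: writing $\mathcal{F}_v^{\rm BDP}$ for a characteristic generator of ${\rm char}_{\Lambda^-}(X_v^{\rm BDP}(E/K_\infty^-))$, one expects
\[
\mathcal{F}_v^{\rm BDP}(0)\sim_p(1-\alpha_p^{-1})^4\cdot\frac{\#\Sha(E/K)[p^\infty]\cdot{\rm Tam}(E/K)}{(\#E(K)_{\rm tors})^2}\cdot\frac{\log_{\omega_E}(P_K)^2}{[E(K)_{\rm free}:\Z P_K]^2},
\]
with the $\log_{\omega_E}(P_K)^2$ contribution arising from the relaxed local condition at $v$ (via the Bloch--Kato exponential pairing), the Tamagawa factors from the strict condition at $\overline{v}$ and at primes of bad reduction, and $E(K)_{\rm free}=E(K)/E(K)_{\rm tors}$. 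Equating these two expressions via Conjecture~\ref{conj:BDP-IMC} cancels the transcendental factor $\log_{\omega_E}(P_K)^2$.

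The remaining ingredient is the Gross--Zagier formula combined with the standard Rankin--Selberg period relations, which together express $L'(E/K,1)/(\Omega_E\Omega_{E^K})$ as a $p$-unit multiple of $\hat h_K(P_K)/(u_K^2c^2)$; writing $\hat h_K(P_K)=[E(K)_{\rm free}:\Z P_K]^2\cdot{\rm Reg}_{E/K}$ and plugging into the equality produced in the preceding step yields the rank-$1$ $p$-part of BSD for $E/K$. Combining with the factorization $L(E/K,s)=L(E,s)L(E^K,s)$ and the rank-$0$ $p$-part of BSD for $E^K/\Q$ already secured in Step~1 then delivers the desired statement for $E/\Q$. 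The main obstacle is the BDP Euler characteristic formula in the second step: the relaxed local condition at $v$ must be carefully unwound so as to extract precisely the $\log_{\omega_E}(P_K)^2$ contribution (in analogy with the $(1-\alpha_p^{-1})^2$ factor in \eqref{eq:euler-char-Q}, via local Tate duality and the Bloch--Kato exponential), and all multiplicative constants---the Manin constant $c$, the index $u_K$, the Euler factor $(1-\alpha_p^{-1})^2$, and the archimedean period ratios---must be tracked across all steps to deliver an equality of $p$-adic valuations rather than just an inequality.
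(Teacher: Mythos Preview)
Your proposal is essentially the paper's own argument: choose $K$ with $L(E^K,1)\neq 0$, specialize Conjecture~\ref{conj:BDP-IMC} at the trivial character using the BDP formula on the analytic side and the anticyclotomic Euler-characteristic computation of Jetchev--Skinner--Wan \cite{jsw} (which supplies exactly your ``expected'' formula for $\mathcal{F}_v^{\rm BDP}(0)$) on the algebraic side, then invoke Gross--Zagier and descend via $L(E/K,s)=L(E,s)L(E^K,s)$ and the assumed rank-$0$ result. One small correction to your constants: the Euler factor in both the BDP formula and the JSW formula is $\bigl(\tfrac{1-a_p(E)+p}{p}\bigr)^2=(1-\alpha_p^{-1})^2(1-\beta_p^{-1})^2$, not $(1-\alpha_p^{-1})^2$ (resp.\ $(1-\alpha_p^{-1})^4$).
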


\begin{proof}
Suppose ${\rm ord}_{s=1}L(E,s)=1$, and choose an imaginary quadratic field $K$ such that:
\begin{itemize}
\item[(i)] Hypotheses $\eqref{eq:heeg}$ and $\eqref{eq:spl}$ hold;
\item[(ii)] $L(E^K,1)\neq 0$. 
\end{itemize}
Then ${\rm ord}_{s=1}L(E/K,s)=1$, which by the work of Gross--Zagier and Kolyvagin \cite{kolyvagin-mw-sha} implies that the classical Heegner point $y_K\in E(K)$ is non-torsion, and we have
\begin{equation}\label{eq:GZK}
{\rm rank}_{\Z}E(K)=1,\quad\#\Sha(E/K)<\infty;
\end{equation}
in particular, the index $[E(K):\Z y_K]$ is finite. Let $\mathcal{F}_v^{\rm BDP}(E/K_\infty^-)\in\Lambda^-$ be a characteristic power series for $X_v^{\rm BDP}(E/K_\infty^-)$. Then by the work of Jetchev--Skinner--Wan \cite{jsw} we have the equality up to a $p$-adic unit
\begin{equation}\label{eq:JSW}
\mathcal{F}_v^{\rm BDP}(E/K_\infty^-)(0)\sim_p\biggl(\frac{1-a_p(E)+p}{p}\biggr)^2\cdot\prod_{w\mid N}c_w(E/K)\cdot\#\Sha(E/K)\cdot\frac{{\rm log}_{\omega_E}(y_K)^2}{[E(K):\Z y_K]^2},
\end{equation}
where $a_p(E)=p+1-\#E(\F_p)$, $c_w(E/K)$ is the Tamagawa factor of $E$ at $w$, and ${\rm log}_{\omega_E}:E(K_v)\rightarrow\Q_p$ is the formal group logarithm. On the other hand, the formula of Bertolini--Darmon--Prasanna \cite{BDP} yields
\begin{equation}\label{eq:BDP}
\mathcal{L}_p^{\rm BDP}(f/K)^2(0)\sim_p\frac{1}{u_K^2c^2}\cdot\biggl(\frac{1-a_p(E)+p}{p}\biggr)^2\cdot{\rm log}_{\omega_E}(y_K)^2. 
\end{equation}
Since Conjecture~\ref{conj:BDP-IMC} implies that $\mathcal{L}_v^{\rm BDP}(f/K)(0)\sim_p\mathcal{F}_v^{\rm BDP}(E/K_\infty^-)(0)$, combining \eqref{eq:JSW} and \eqref{eq:BDP} we arrive at
\[
[E(K):\Z y_K]^2\sim_p\#\Sha(E/K)\cdot\prod_{w\mid N}c_w(E/K)\cdot u_K^2c^2.
\]
By Gross--Zagier formula \cite{grosszagier}, this last relation is \emph{equivalent} to the $p$-part of the BSD formula when ${\rm ord}_{s=1}L(E/K)=1$. Thus using from the factorization  
\[
L(E/K,s)=L(E,s)L(E^K,s)
\]
and the assumption that the $p$-part of the BSD formula holds for $L(E^K,1)$, the result follows.
\end{proof}

\section{Lecture 2: BDP main conjecture at Eisenstein primes}

\subsection{Main result}\label{subsec:lec3-main}

Let $p\nmid 2N$ be a prime of good ordinary reduction for $E$. When the residual representation 
\[
\rho_{E,p}:G_{\Q}\rightarrow{\rm Aut}_{\F_p}(E[p])\simeq{\rm GL}_2(\F_p)
\]
has ``big image'' (and satisfies some mild ramification hypotheses), Conjectures~\ref{conj:PR-IMC} and \ref{conj:BDP-IMC} are known by combining:
\begin{itemize}
\item Euler/Kolyvagin system methods using Heegner points (\cite{mazrub},  \cite{howard});
\item A vast generalization of Ribet's methods (\cite{skinner-urban}, \cite{wan-ANT-IMC,wan-HPMC}).
\end{itemize}
Now we put ourselves in the opposite case where $E[p]$ is \emph{reducible} as a $G_\Q$-module, say
\begin{equation}\label{eq:Eis-congr}
E[p]^{ss}\simeq\F_p(\phi)\oplus\F_p(\psi),
\end{equation}
where $\phi,\psi:G_\Q\rightarrow\F_p^\times$ are characters. Note that $\psi=\omega\phi^{-1}$ by the Weil pairing, where $\omega:G_\Q\rightarrow\F_p^\times$ is the mod $p$ cyclotomic character. The goal of this lecture is to outline the proof of the following result from \cite{eisenstein} (in the rank one case) and \cite{CGS}. 

\begin{theorem}\label{thm:CGLS}
Let $K$ be an imaginary quadratic field of odd discriminant $-D_K\neq -3$, and satisfying hypotheses \eqref{eq:heeg} and \eqref{eq:spl}. Suppose $p>2$ is a good Eisenstein prime for $E$ with 
\[
\phi\vert_{G_p}\neq 1,\omega,
\]
where $G_p\subset G_\Q$ is a decomposition group at $p$. Then the BDP main conjecture (Conjecture~\ref{conj:BDP-IMC}) and Perrin-Riou's main conjecture (Conjecture~\ref{conj:PR-IMC}) both hold.
\end{theorem}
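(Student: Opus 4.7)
The plan is to exploit the reducibility \eqref{eq:Eis-congr} to reduce both main conjectures for $E$ to Iwasawa main conjectures for Hecke characters of $K$, which are known thanks to Rubin's theorem. I would begin with the BDP main conjecture (Conjecture~\ref{conj:BDP-IMC}). Using $E[p]^{ss}\simeq\F_p(\phi)\oplus\F_p(\psi)$ with $\psi=\omega\phi^{-1}$, pick a $G_\Q$-stable lattice $T\sq V_pE$ fitting in a short exact sequence
\[
0\to T^+\to T\to T^-\to 0
\]
with $T^\pm$ free of rank one over $\Z_p$ on which $G_\Q$ acts by characters $\tilde\phi,\tilde\psi$ lifting $\phi,\psi$. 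The hypothesis $\phi\vert_{G_p}\neq 1,\omega$ guarantees that this filtration is compatible with the unramified/ramified subquotients coming from ordinarity at $p$, so that the BDP local conditions restrict to well-understood local conditions on $T^\pm$.

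Next, I would use the induced long exact sequence of Galois cohomology to express ${\rm char}_{\Lambda^-}(X_v^{\rm BDP}(E/K_\infty^-))$ as a product of characteristic ideals of ``BDP-type'' Selmer groups attached to the Hecke characters of $K$ obtained by restricting $\tilde\phi,\tilde\psi$ to $G_K$. On the analytic side, the factorization $L(f/K,\chi,s)=L(\tilde\phi\chi,s)L(\tilde\psi\chi,s)$ lifts $p$-adically to a factorization of $\mathcal{L}_v^{\rm BDP}(f/K)$ as a product of Katz $p$-adic $L$-functions, matching the algebraic decomposition up to units. Invoking Rubin's two-variable Iwasawa main conjecture for $K$ then identifies the character-side characteristic ideals with Katz $p$-adic $L$-functions, and thus yields Conjecture~\ref{conj:BDP-IMC} up to equality of $\mu$-invariants; the $\mu$-invariants can be controlled by computing mod $p$ Selmer groups directly and invoking the Ferrero--Washington-type vanishing known for the Katz $p$-adic $L$-functions in this setting.

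Finally, to derive Perrin-Riou's main conjecture (Conjecture~\ref{conj:PR-IMC}), I would feed the BDP main conjecture into the $\Lambda^-$-adic explicit reciprocity law of Castella--Hsieh \cite{cas-hsieh1}, which pairs $\kappa_\infty^{\rm Hg}$ with $\mathcal{L}_v^{\rm BDP}(f/K)$ via the Perrin-Riou big logarithm. A Poitou--Tate duality argument comparing local terms at $v$ and $\overline{v}$ converts the BDP characteristic-ideal identity into the identity appearing in Conjecture~\ref{conj:PR-IMC}, picking up the factor $\frac{1}{u_K^2c^2}$ from the boundary terms.

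The main obstacle is the first step: the BDP local conditions do not literally split along $T^+\subset T$, so the error terms supported at $p$ and at primes $\ell\mid N$ must be tracked precisely to obtain a usable algebraic factorization. This is exactly where the hypotheses enter: $\phi\vert_{G_p}\neq 1,\omega$ makes the ordinary filtration splice cleanly with the character filtration (the restrictions of $T^\pm$ to $G_p$ coincide, up to a unit, with the expected unramified/ramified subquotients and their $G_p$-cohomology is controllable), while the assumption that $-D_K$ is odd and $\neq -3$ sidesteps torsion and root-of-unity issues that would otherwise pollute Rubin's main conjecture. The delicate bookkeeping of the Tamagawa-type contributions at the Eisenstein prime, which are invisible in the classical ``big image'' approach via Heegner Kolyvagin systems, is the most intricate technical point.
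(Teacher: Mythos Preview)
Your approach has a fundamental gap at the very first step. You claim to find a $G_\Q$-stable lattice $T\subset V_pE$ with a filtration $0\to T^+\to T\to T^-\to 0$ in which $G_\Q$ acts on $T^\pm$ through $\Z_p^\times$-valued characters $\tilde\phi,\tilde\psi$ lifting $\phi,\psi$. But this would force $V_pE$ to be \emph{reducible} as a $G_\Q$-representation, which for an elliptic curve over $\Q$ happens only in the CM case. For non-CM $E$ at an Eisenstein prime, only the mod~$p$ representation $E[p]$ is reducible; $V_pE$ remains irreducible, so no such $\tilde\phi,\tilde\psi$ exist and there is no factorization $L(f/K,\chi,s)=L(\tilde\phi\chi,s)L(\tilde\psi\chi,s)$. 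At best one has the congruence $f\equiv E_{\phi,\psi}\pmod{p}$ and the resulting mod~$p$ congruence between $\mathcal{L}_v^{\rm BDP}(f/K)$ and a product of Katz $p$-adic $L$-functions.

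The paper's proof reflects exactly this limitation. The congruence \eqref{eq:Eis-congr} is used, in the style of Greenberg--Vatsal, only to compare Iwasawa invariants: reducing mod~$p$ and invoking Rubin's main conjecture for $K$ together with Hida's $\mu=0$ result gives $\mu(X_v^{\rm BDP})=\mu(\mathcal{L}_v^{\rm BDP})=0$ and $\lambda(X_v^{\rm BDP})=\lambda(\mathcal{L}_v^{\rm BDP,2})$. This alone does not produce either divisibility in Conjecture~\ref{conj:BDP-IMC}; an independent input is still required. That input is a Heegner-point Kolyvagin system argument (Proposition~\ref{prop:KS-1}), carried out without big-image hypotheses and with carefully controlled ``error terms'', which supplies the divisibility ``$\supset$'' in $\Lambda^-[1/p]$. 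Your proposal omits this second step entirely, and without it the argument cannot close. Your final step---deducing Perrin-Riou's conjecture from the BDP conjecture via the $\Lambda^-$-adic reciprocity law of \cite{cas-hsieh1} and Poitou--Tate duality---is correct and matches the paper's Proposition~\ref{prop:equiv}.
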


Recall that $\Lambda^-$ denotes the anticyclotomic Iwasawa algebra. From the structure theorem for finitely generated $\Lambda^-$-modules and the Weierstrass preparation theorem, one has Iwasawa $\lambda$- and $\mu$-invariants attached to $X_v^{\rm BDP}(E/K_\infty^-)$ and $\mathcal{L}_v^{\rm BDP}(E/K)$. An understanding of these invariants is a key in Theorem~\ref{thm:CGLS}, whose proof is naturally divided into 2 steps:

\begin{itemize}
\item{}\emph{Step 1}: Exploit the congruence \eqref{eq:Eis-congr} to show that 
\begin{align*}
\mu(X_v^{\rm BDP}(E/K_\infty^-))&=\mu(\mathcal{L}_v^{\rm BDP}(E/K))=0,\\
\lambda(X_v^{\rm BDP}(E/K_\infty^-))&=\lambda(\mathcal{L}_v^{\rm BDP}(E/K)^2).
\end{align*}
\item{}\emph{Step 2}: Show that $X_v^{\rm BDP}(E/K_\infty^-)$ is $\Lambda^-$-torsion, with 
\[
{\rm char}_{\Lambda^-}(X_v^{\rm BDP}(E/K_\infty^-))\supset\bigl(\mathcal{L}_v^{\rm BDP}(f/K)^2\bigr)
\]
as ideals in $\Lambda^{\rm ur}[1/p]$.
\end{itemize}

Clearly the combination of these two imply the equality 
\[
{\rm char}_{\Lambda^-}(X_v^{\rm BDP}(E/K_\infty^-))=\bigl(\mathcal{L}_v^{\rm BDP}(f/K)^2\bigr)
\]
in $\Lambda^{\rm ur}$ predicted by Conjecture~\ref{conj:BDP-IMC}. That they also imply Conjecture~\ref{conj:PR-IMC} follows from the \emph{equivalence} between the two conjectures, a consequence of the $\Lambda^-$-adic analogue of the BDP formula \cite{BDP} obtained in \cite{cas-hsieh1}. 

\begin{remark}
In a recent work \cite{Keller-Yin}, T.\,Keller and M.\,Yin have removed the hypothesis on $\phi$ in Theorem~\ref{thm:CGLS}. They have also extended the result to higher weight modular forms, and (using Hida theory in the style of Skinner \cite{skinner-mult}) even to the  case of multiplicative Eisenstein primes.
\end{remark}

In the next two subsections we outline the main ideas that go into  the proofs of the above \emph{Step~1} and \emph{Step~2}, respectively.

\subsection{Anticyclotomic Greenberg--Vatsal method}\label{subsec:GV}

Denote by $S$ the set of primes of $K$ dividing $N$, and by $\Sigma\supset S$ the set of primes of $K$ dividing $Np\infty$. Let $K^\Sigma$ be the Galois group of the maximal extension of $K$ unramified outside $\Sigma$, and consider the \emph{$S$-imprimitive} BDP Selmer group
\begin{equation}\label{eq:Sel-E}
{\rm Sel}_{v,S}^{\rm BDP}(E/K_\infty^-):={\rm ker}\biggl\{{\rm H}^1(K^\Sigma/K_\infty^-,E[p^\infty])\rightarrow\prod_{w\mid\overline{v}}{\rm H}^1(K_{\infty,w}^-,E[p^\infty])\biggr\}.
\end{equation}
Let $X^{\rm BDP}_{v,S}(E/K_\infty^-)$ be the Pontryagin dual of ${\rm Sel}_{v,S}^{\rm BDP}(E/K_\infty^-)$. Multiplying $\mathcal{L}_v^{\rm BDP}(f/K)$ by certain elements in $\Lambda^-$ interpolating the local Euler factors of $L(f/K,\chi,s)$ at $s=1$ at primes $v\in S$ over characters $\chi$ of $\Gamma^-$, one can define an $S$-imprimitive $\mathcal{L}_{v,S}^{\rm BDP}(f/K)\in\Lambda^{\rm ur}$ interpolating the central $L$-values of $L(f/K,\chi,s)$ at $s=1$ with the Euler factors at the primes in $S$ stripped out.

The principle to be exploited is that Conjecture~\ref{conj:BDP-IMC} should be equivalent to its $S$-imprimitive counterpart, so in particular
\[
{\rm char}_{\Lambda^-}(X_{v,S}^{\rm BDP}(E/K_\infty))\overset{?}=\bigl(\mathcal{L}_{v,S}^{\rm BDP}(f/K)^2\bigr),
\]
with the latter having the advantage (first noticed by Greenberg in the context of classical Iwasawa theory \cite{greenberg-nagoya}) that the objects involved  are better-behaved with respect to congruences.

Let $\Phi,\Psi:G_\Q\rightarrow\Z_p^\times$ be the Teichm\"uller lifts of $\phi,\psi$, respectively. Attached to $\Phi,\Psi$ one has $\Lambda^-$-cotorsion Selmer groups ${\rm Sel}_{v,S}(\Phi/K_\infty^-),{\rm Sel}_{v,S}(\Psi/K_\infty^-)$  (whose definition is recalled in the proof of Proposition~\ref{prop:Iw-inv-alg} below) with associated Iwasawa $\lambda$-invariants denoted $\lambda_\phi^S,\lambda_\psi^S$, respectively.

\begin{proposition}\label{prop:Iw-inv-alg}
Suppose $p\nmid 2N$ is such that $E[p]^{ss}\simeq\F(\phi)\oplus\F(\psi)$ as $G_\Q$-modules with $\phi\vert_{G_p}\neq 1,\omega$. Then $X_{v,S}^{\rm BDP}(E/K_\infty^-)$ is $\Lambda^-$-torsion, with
\[
\mu(X_{v,S}^{\rm BDP}(E/K_\infty^-))=0,\quad\quad
\lambda(X_{v,S}^{\rm BDP}(E/K_\infty^-))=\lambda_\phi^S+\lambda_\psi^S.
\]
\end{proposition}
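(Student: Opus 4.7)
The plan is to follow the anticyclotomic Greenberg--Vatsal method, passing through mod $p$ Selmer groups and exploiting the short exact sequence of $G_\Q$-modules
\[
0\to\F_p(\phi)\to E[p]\to\F_p(\psi)\to 0
\]
coming from \eqref{eq:Eis-congr} (after reordering $\phi,\psi$ if necessary so that $\F_p(\phi)$ is the submodule). Taking continuous $G_{K^\Sigma/K_\infty^-}$-cohomology and imposing the $S$-imprimitive BDP local conditions termwise (unrestricted at primes above $v$ and in $S$, strict at primes above $\overline{v}$), one obtains an exact sequence relating ${\rm Sel}_{v,S}^{\rm BDP}(E[p]/K_\infty^-)$ to the mod $p$ reductions of ${\rm Sel}_{v,S}(\Phi/K_\infty^-)$ and ${\rm Sel}_{v,S}(\Psi/K_\infty^-)$, modulo finite kernel and cokernel contributions coming from local cohomology at primes above $\overline{v}$ and from global $H^0$ and $H^2$ terms.

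The first step will be to establish $\mu(X_{v,S}^{\rm BDP}(E/K_\infty^-))=0$. For a finitely generated torsion $\Lambda^-$-module $X$, the vanishing of $\mu(X)$ is equivalent to finiteness of $X[p^\infty]$, which by Pontryagin duality becomes a corank statement for the mod $p$ Selmer group. The displayed exact sequence then bounds this corank in terms of the analogous mod $p$ coranks for $\Phi$ and $\Psi$, and the same bookkeeping carried out at the level of $\Lambda^-$-ranks yields the $\Lambda^-$-torsionness of $X_{v,S}^{\rm BDP}(E/K_\infty^-)$ itself. The vanishing of $\mu$ for the character Selmer groups ${\rm Sel}_{v,S}(\Phi/K_\infty^-)$ and ${\rm Sel}_{v,S}(\Psi/K_\infty^-)$ will be invoked from the anticyclotomic analogue of Ferrero--Washington (in the style of Hida and Hsieh), whose hypotheses are ensured by $\phi\vert_{G_p}\neq 1,\omega$.

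With $\mu=0$ in hand throughout, each $\lambda$-invariant equals the $\F_p$-corank of the corresponding mod $p$ Selmer group. Additivity of $\F_p$-coranks in the exact sequence, together with the finiteness of the error terms, then yields
\[
\lambda(X_{v,S}^{\rm BDP}(E/K_\infty^-))=\lambda_\phi^S+\lambda_\psi^S,
\]
as claimed.

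The main technical obstacle will be controlling the local terms at primes above $\overline{v}$ as one passes along the filtration $\F_p(\phi)\subset E[p]$: the strict trivialization condition at $\overline{v}$ imposed by the BDP Selmer group does not automatically propagate through the short exact sequence, and one needs the hypothesis $\phi\vert_{G_p}\neq 1,\omega$ (equivalently $\psi\vert_{G_p}\neq 1,\omega$, since $\psi=\omega\phi^{-1}$) to guarantee vanishing or at worst finiteness of the relevant local $H^0$ and $H^2$ groups at places of $K_\infty^-$ above $\overline{v}$. A careful Euler characteristic bookkeeping in the style of Greenberg, coupled with local Tate duality to handle the passage between the strict and relaxed conditions, should then close the argument.
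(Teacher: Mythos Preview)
Your approach mirrors the paper's, but there is one genuine gap: the vanishing of $\mu$ for the character Selmer groups ${\rm Sel}_{v,S}(\Phi/K_\infty^-)$ and ${\rm Sel}_{v,S}(\Psi/K_\infty^-)$ does not follow from an ``anticyclotomic Ferrero--Washington'' result alone. Hida's (and Hsieh's) theorem is about the \emph{analytic} $\mu$-invariant of the Katz $p$-adic $L$-function; to transfer this to the algebraic side the paper invokes Rubin's proof of the Iwasawa main conjecture for $K$. Without that bridge your argument does not close, and you should name it explicitly.

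A secondary point: you hedge by allowing ``finite kernel and cokernel contributions'' in the Selmer-group sequence and plan to absorb them by Euler-characteristic bookkeeping. In fact, under $\phi\vert_{G_p}\neq 1,\omega$ the paper obtains an honest short exact sequence
\[
0\to{\rm Sel}_{v,S}(\phi/K_\infty^-)\to{\rm Sel}_{v,S}^{\rm BDP}(E[p]/K_\infty^-)\to{\rm Sel}_{v,S}(\psi/K_\infty^-)\to 0
\]
with no error terms, and it is this exactness that yields the clean identity $\lambda=\lambda_\phi^S+\lambda_\psi^S$. Since the objects here are $\F_p$-vector spaces, ``finite'' error terms are the same as finite-dimensional ones and would genuinely shift the $\lambda$-count; so you either need to show they vanish (which is what the hypothesis buys you) or carry them along as explicit correction terms, not discard them.
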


\begin{proof}
Let $K_\phi$ is the fixed field of ${\rm ker}(\phi\vert_{G_K})$, and let $M_\infty$ be the maximal abelian pro-$p$ extension of $K_\infty^-K_\phi$ unramfied outside $v$ and $S$. By standard arguments, the Selmer group
\begin{equation}\label{eq:Sel-phi}
\begin{aligned}
{\rm Sel}_{v,S}(\Phi/K_\infty^-)&:={\rm ker}\biggl\{{\rm H}^1(K^\Sigma/K_\infty^-,\Q_p/\Z_p(\Phi))\rightarrow\prod_{w\mid\overline{v}}{\rm H}^1(K_{\infty,w}^-,\Q_p/\Z_p(\Phi))\biggr\}\\
&\,\simeq{\rm Hom}_{\rm cts}({\rm Gal}(M_\infty/K_\infty^-K_\phi),\Q_p/\Z_p)
\end{aligned}
\end{equation}
is $\Lambda^-$-cotorsion and with no proper $\Lambda^-$-submodules of finite index. On the other hand, by Hida's result on the vanishing of the $\mu$-invariant of anticyclotomic Katz $p$-adic $L$-functions \cite{hidamu=0} together with Rubin's proof of the Iwasawa main conjecture for $K$ \cite{rubinmainconj}, we have $\mu({\rm Sel}_{v,S}(\Phi/K_\infty^-)^\vee)=0$. Thus we see that ${\rm Sel}_{v,S}(\Phi/K_\infty^-)$ is $p$-divisible, and therefore the $\lambda$-invariant of its Pontryagin dual ${\rm Sel}_{v,S}(\Phi/K_\infty^-)^\vee$ is given by
\begin{equation}\label{eq:lambda}
\lambda_\phi^S={\rm dim}_{\F_p}\bigl({\rm Sel}_{v,S}(\Phi/K_\infty^-)[p]\bigr).
\end{equation}
From our conditions on $\phi$, it is easy to see that the natural map
\[
{\rm H}^1(K_\infty^-,\Q_p/\Z_p(\phi))\rightarrow{\rm H}^1(K_\infty^-,\Q_p/\Z_p(\Phi))[p]
\]
gives ${\rm Sel}_{v,S}(\phi/K_\infty^-)\simeq{\rm Sel}_{v,S}(\Phi/K_\infty^-)[p]$, where ${\rm Sel}_{v,S}(\phi/K_\infty^-)$ is the \emph{residual Selmer group} defined as in \eqref{eq:Sel-phi}  with $\F_p(\phi)$ in place of $\Q_p/\Z_p(\Phi)$. Of course, the same results apply with
$\psi=\omega\phi^{-1}$ in place of $\phi$. 

Letting ${\rm Sel}_{v,S}^{\rm BDP}(E[p]/K_\infty^-)$ be the Selmer group defined as in \eqref{eq:Sel-E} with $E[p^\infty]$ replaced by $E[p]$, from the short exact sequence
\begin{equation}\label{eq:Eis-congr-ses}
0\rightarrow\F_p(\phi)\rightarrow E[p]\rightarrow\F_p(\psi)\rightarrow 0
\end{equation}
we immediately arrive at the short exact sequence
\begin{equation}\label{eq:Sel-ses}
0\rightarrow{\rm Sel}_{v,S}(\phi/K_\infty^-)\rightarrow{\rm Sel}_{v,S}^{\rm BDP}(E[p]/K_\infty^-)\rightarrow{\rm Sel}_{v,S}(\psi/K_\infty^-)\rightarrow 0.
\end{equation}

The above thus shows that ${\rm Sel}_{v,S}(E[p]/K_\infty^-)\simeq{\rm Sel}_{v,S}^{\rm BDP}(E/K_\infty^-)[p]$ is finite, and so $X_{v,S}^{\rm BDP}(E/K_\infty^-)$ is $\Lambda^-$-torsion with $\mu=0$. Since similarly as before the $\lambda$-invariant of ${\rm Sel}_{v,S}^{\rm BDP}(\Phi/K_\infty^-)^\vee$ can be computed as ${\rm dim}_{\F_p}({\rm Sel}_{v,S}^{\rm BDP}(E/K_\infty^-)[p])$, the last claim in the proposition follows from \eqref{eq:Sel-ses} and \eqref{eq:lambda}.
\end{proof}

On the analytic side, \eqref{eq:Eis-congr-ses} implies a congruence
\[
f\equiv E_{\phi,\psi}\pmod{p}
\]
between the newform $f$ attached to $E$ 
and a weight $2$ Eisenstein series $E_{\phi,\psi}$ attached to the Dirichlet characters $\Phi,\Psi$. From the constructions of  $\mathcal{L}_{v,S}^{\rm BDP}(f/K)$ and of the Katz $p$-adic $L$-function for characters of $K$ \cite{katz-CM,HT-ENS}, building on work of Kriz \cite{kriz} one then deduces a congruence
\[
\mathcal{L}_{v,S}^{\rm BDP}(E/K)^2\equiv\mathcal{L}_{v,S}^{\rm Katz}(\Phi)\cdot\mathcal{L}_{v,S}^{\rm Katz}(\Psi)\pmod{p\Lambda^{\rm ur}},
\]
which together with the aforementioned vanishing result of Hida  yields the equalities
\begin{equation}\label{eq:Iw-an}
\mu(\mathcal{L}_{v,S}^{\rm BDP}(E/K))=0,\quad\quad\lambda(\mathcal{L}_{v,S}^{\rm BDP}(E/K)^2)=\lambda(\mathcal{L}^{\rm Katz}_{v,S}(\Phi))+\lambda(\mathcal{L}^{\rm Katz}_{v,S}(\Psi)).\nonumber
\end{equation}
By Rubin's proof of the Iwasawa main conjecture for $K$, these last two equalities and Proposition~\ref{prop:Iw-inv-alg} yield the proof of \emph{Step~1}.

\subsection{Kolyvagin system argument with ``error terms''}\label{subsec:Kolyvagin}

As noted in $\S\ref{subsec:lec3-main}$, the proof of Theorem~\ref{thm:CGLS} exploits the following interplay  between Conjectures~\ref{conj:BDP-IMC} and Conjecture~\ref{conj:PR-IMC}. 

\begin{proposition}\label{prop:equiv}
Suppose $E(K)[p]=0$. Then the following are equivalent:
\begin{enumerate}
\item 
$X_v^{\rm BDP}(E/K_\infty^-)$ is $\Lambda^-$-torsion, $\mathcal{L}_v^{\rm BDP}(f/K)$ is nonzero, and
\[
{\rm char}_{\Lambda^-}(X_v^{\rm BDP}(E/K_\infty^-))\supset\bigl(\mathcal{L}_v^{\rm BDP}(f/K)^2\bigr)
\]
in $\Lambda^{\rm ur}[1/p]$.
\item 
$X(E/K_\infty^-)$ has $\Lambda^-$-rank one, $\kappa_\infty^{\rm Hg}$ is not $\Lambda^-$-torsion, and
\[
{\rm char}_{\Lambda^-}(X(E/K_\infty^-)_{\rm tors})\supset{\rm char}_{\Lambda^-}\biggl(\frac{\check{S}(E/K_\infty^-)}{\Lambda^-\cdot\kappa_\infty^{\rm Hg}}\biggr)^2
\]
in $\Lambda^-[1/p]$.
\end{enumerate}
The same result holds  for the opposite divisibilities, and without inverting $p$.
\end{proposition}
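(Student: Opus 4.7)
The plan is to combine Poitou--Tate global duality with the $\Lambda^-$-adic extension of the BDP formula proved in \cite{cas-hsieh1} to pass between the two conjectures. The hypothesis $E(K)[p]=0$ is imposed so that the relevant global cohomology modules are $p$-torsion-free; this ensures the resulting exact sequences of $\Lambda^-$-modules are clean and that characteristic ideals transform multiplicatively through them.

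The central input is the explicit reciprocity law of \cite{cas-hsieh1}: in the ordinary split setting, the localization $\mathrm{loc}_v(\kappa_\infty^{\rm Hg})$ lies in the ordinary part of $H^1(K_{\infty,v}^-,T_pE)$, and Perrin-Riou's big logarithm sends it, up to a unit in $\Lambda^{\rm ur}$, to $\mathcal{L}_v^{\rm BDP}(f/K)$. Consequently $\mathcal{L}_v^{\rm BDP}(f/K)\neq 0$ is equivalent to $\mathrm{loc}_v(\kappa_\infty^{\rm Hg})$ being non-$\Lambda^-$-torsion, which forces $\kappa_\infty^{\rm Hg}$ itself to be non-$\Lambda^-$-torsion; by the standard $\Lambda^-$-rank count coming from Poitou--Tate (given $E(K)[p]=0$), this is in turn equivalent to $X(E/K_\infty^-)$ having $\Lambda^-$-rank exactly one and $X_v^{\rm BDP}(E/K_\infty^-)$ being $\Lambda^-$-torsion. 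This already matches the qualitative halves of (1) and (2).

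For the characteristic ideal identities, I would assemble the Poitou--Tate exact sequence comparing the compact Perrin-Riou Selmer group $\check{S}(E/K_\infty^-)$ (ordinary at both $v$ and $\overline{v}$) with the BDP Selmer group (relaxed at $v$, strict at $\overline{v}$). Under $E(K)[p]=0$, this produces a four-term exact sequence whose middle term is the ordinary local cohomology at $v$, pseudo-identified by Perrin-Riou's logarithm with $\Lambda^{\rm ur}$, and in which $\mathrm{loc}_v(\kappa_\infty^{\rm Hg})$ is sent to $\mathcal{L}_v^{\rm BDP}(f/K)$. Taking characteristic ideals and accounting for the anticyclotomic involution exchanging $v$ and $\overline{v}$ (which accounts for the squares in both conjectures), this yields a relation of the form
\[
{\rm char}_{\Lambda^-}(X_v^{\rm BDP}(E/K_\infty^-))\cdot{\rm char}_{\Lambda^-}\bigl(\check{S}(E/K_\infty^-)/\Lambda^-\cdot\kappa_\infty^{\rm Hg}\bigr)^{2} \,=\, \bigl(\mathcal{L}_v^{\rm BDP}(f/K)^{2}\bigr)\cdot{\rm char}_{\Lambda^-}(X(E/K_\infty^-)_{\rm tors})
\]
as ideals in $\Lambda^{\rm ur}[1/p]$, up to a unit from $u_K$ and the Manin constant $c$. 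The equivalence of (1) and (2), and of the opposite divisibilities, is then immediate from this single identity.

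The main obstacle is the integrality clause ``without inverting $p$''. Each ingredient --- the Poitou--Tate comparison, Perrin-Riou's logarithm, and the Castella--Hsieh reciprocity --- is naturally only a pseudo-isomorphism, so upgrading the equivalence from $\Lambda^{\rm ur}[1/p]$ to an equivalence in $\Lambda^{\rm ur}$ requires pinning down the kernels and cokernels at each step. The hypothesis $E(K)[p]=0$ is what makes this possible by killing the potential finite-length obstructions, but the bookkeeping between the ordinary filtration at $v$, the $\Lambda^{\rm ur}$-coefficient structure inherited from the CM period, and the underlying $\Lambda^-$-rationality is where the real technical content of the proposition lies.
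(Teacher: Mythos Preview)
Your proposal is correct and follows essentially the same approach as the paper: the paper's sketch likewise invokes the generalized Coleman/Perrin-Riou logarithm map (with pseudo-null cokernel) sending ${\rm res}_v(\kappa_\infty^{\rm Hg})$ to $\mathcal{L}_v^{\rm BDP}(f/K)$ via the $\Lambda^-$-adic BDP formula of \cite{cas-hsieh1}, and then deduces the equivalence from a double application of Poitou--Tate duality at $v$ and $\overline{v}$ (your use of a single Poitou--Tate sequence plus the anticyclotomic involution is just a repackaging of the same step).
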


\begin{proof}[Sketch of proof]
By $p$-ordinarity, there is a unique quotient $T_p^-E\simeq\Z_p$ of $T_pE$  where the $G_{p}$-action is unramified.  From the two-variable extension (due to Loeffler--Zerbes \cite{LZ-IJNT}) of the cyclotomic Perrin-Riou big logarithm map \cite{PR-94} one can deduce the existence of an injective generalized Coleman power series map with pseudo-null cokernel
\[
{\rm Col}_v:\varprojlim_n{\rm H}^1(K_{n,v}^-,T_p^-E)\hookrightarrow\Lambda^{\rm ur},
\]
which by virtue of a $\Lambda^-$-adic extension of the BDP formula (see \cite{cas-hsieh1}) sends the natural image of ${\rm res}_v(\kappa_\infty^{\rm Hg})$ to $\mathcal{L}_v^{\rm BDP}(f/K)$. The result then follows from a double application (one involving ${\rm res}_v$ and another involving ${\rm res}_{\overline{v}}$) of Poitou--Tate duality. 
\end{proof}

Since the fact that $\kappa_\infty^{\rm Hg}$ is not $\Lambda^-$-torsion follows from the work of Cornut--Vatsal \cite{cornut,vatsal}\footnote{Alternatively, it also follows from the $\Lambda^-$-adic BDP formula and the nonvanishing of $\mathcal{L}_v^{\rm BDP}(f/K)$ (see \cite{hsieh-special}) via Hida's methods.}, the proof of \emph{Step~2}, and hence of Theorem~\ref{thm:CGLS}, is thus reduced to the following.

\begin{proposition}\label{prop:KS-1}
Suppose $E(K)[p]=0$. Then $X(E/K_\infty^-)$ has $\Lambda^-$-rank one, and we have 
\[
{\rm char}_{\Lambda^-}(X(E/K_\infty^-)_{\rm tors})\supset{\rm char}_{\Lambda^-}\biggl(\frac{\check{S}(E/K_\infty^-)}{\Lambda^-\cdot\kappa_\infty^{\rm Hg}}\biggr)^2
\]
in $\Lambda^-[1/p]$.
\end{proposition}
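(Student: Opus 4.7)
The plan is to run an anticyclotomic Kolyvagin system argument in the spirit of \cite{howard}, with the refinements needed to handle the residually reducible (Eisenstein) setting, as carried out in \cite{CGS}. To begin, I would promote the compatible family of Heegner classes $\{x_n\}$ to a $\Lambda^-$-adic Kolyvagin system $\{\kappa_r\}$ indexed by squarefree products $r$ of admissible Kolyvagin primes (inert in $K$, with $p\mid\ell+1\pm a_\ell(E)$ and Frobenius acting trivially on $E[p^M]$ for appropriate $M$), by forming Kolyvagin's derivative classes over the anticyclotomic tower; the class $\kappa_1$ would then recover $\kappa_\infty^{\rm Hg}$ under the natural $\Lambda^-$-adic limit.

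Next, the Mazur--Rubin structure theorem for Kolyvagin systems translates divisibilities of the $\kappa_r$ in the relevant cohomology modules into a bound on the characteristic ideal of the torsion submodule $X(E/K_\infty^-)_{\rm tors}$ by the square of the characteristic ideal of $\check{S}(E/K_\infty^-)/\Lambda^-\cdot\kappa_\infty^{\rm Hg}$. Combined with the non-triviality of $\kappa_\infty^{\rm Hg}$ supplied by Cornut--Vatsal, this simultaneously forces $X(E/K_\infty^-)$ to have $\Lambda^-$-rank one and yields the desired inclusion, at least in the residually irreducible situation.

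The main obstacle is that the Mazur--Rubin machinery as usually formulated requires $E[p]$ to be absolutely irreducible as a $G_K$-module, precisely the hypothesis failing under \eqref{eq:Eis-congr}. The workaround is to filter $E[p]$ via the short exact sequence
\[
0\rightarrow\F_p(\phi)\rightarrow E[p]\rightarrow\F_p(\psi)\rightarrow 0
\]
and to run Kolyvagin's argument with explicit tracking of the ``error terms'' arising in the local cohomology and Poitou--Tate duality computations. The hypothesis $E(K)[p]=0$ ensures that $\phi|_{G_K},\psi|_{G_K}\neq 1$, which is the essential input that allows one to still produce enough global cohomology classes at admissible primes; the residual Selmer analyses for $\Phi$ and $\Psi$ carried out in the proof of Proposition~\ref{prop:Iw-inv-alg} then feed back here to bound the resulting error terms by finite $p$-powers. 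Since such errors are annihilated upon inverting $p$, the divisibility is obtained in $\Lambda^-[1/p]$ as claimed.
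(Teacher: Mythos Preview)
Your overall shape is right---a $\Lambda^-$-adic Kolyvagin system built from Heegner points, with error terms absorbed by inverting $p$---but the mechanism you propose for controlling those errors is not the one the paper uses, and it is not clear it can be made to work. You suggest that the filtration $0\to\F_p(\phi)\to E[p]\to\F_p(\psi)\to 0$ lets you track error terms via the residual Selmer analyses for $\Phi$ and $\Psi$ from Proposition~\ref{prop:Iw-inv-alg}. That filtration is indeed the engine behind \emph{Step~1} (comparing Iwasawa invariants of Selmer groups), but it does not feed directly into the Kolyvagin descent: the derivative classes live in ${\rm H}^1$ of $T_pE/p^k$, and there is no way to split the Kolyvagin system into pieces for $\phi$ and $\psi$ separately. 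The failure of the Mazur--Rubin primitivity hypothesis is a global obstruction in the structure-theorem step, not something that the character filtration resolves.

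The paper's actual argument is quite different. One localizes at height-one primes $\mathfrak{P}\neq(p)$ of $\Lambda^-$ and approaches each such $\mathfrak{P}$ by a sequence $\mathfrak{P}_m\to\mathfrak{P}$ corresponding to characters $\alpha_m:\Gamma^-\to R_m^\times$; the Kolyvagin descent is then run for the twisted modules $(T_pE\otimes\alpha_m)/p^k$. The error terms $E_m$ that appear behave like ${\rm ord}_p(\alpha_m(\gamma)-\alpha_m^{-1}(\gamma))$, reflecting the self-duality of the representation rather than the $\phi/\psi$ filtration. For $\mathfrak{P}\neq(\gamma-1)$ these stay bounded, giving the divisibility away from the augmentation ideal. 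The genuinely new ingredient---absent from your sketch---is the treatment of $\mathfrak{P}=(\gamma-1)$: here the naive error terms blow up, and one instead exploits the action of complex conjugation on $(T_pE\otimes\alpha_m)/p^m$ (with $\alpha_m\equiv 1\pmod{p^m}$) to run a separate induction with bounded error. Without this step the argument would only yield the divisibility in $\Lambda^-[1/p,1/(\gamma-1)]$.
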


\begin{proof}
This follows from a refinement of Kolyvagin's methods building on some of the techniques developed by Howard and Nekov\'{a}\v{r} (see \cite{howard,nekovar}) in related settings. The difficulty in the present case lies in the fact that no ``big image'' hypotheses on $T_pE$ is being made.

By standard arguments, the non-triviality of $\kappa_\infty^{\rm Hg}$ and a generalized Cassels--Tate pairing implies the existence of a $\Lambda^-$-module pseudo-isomorphism
\[
X(E/K_\infty^-)\sim\Lambda^-\oplus M\oplus M
\]
with $M$ a finitely generated torsion $\Lambda^-$-module. Thus the task is to compare the characteristic ideal of $M$ with that of ${\check{S}(E/K_\infty^-)}/\Lambda^-\cdot\kappa_\infty^{\rm Hg}$. Let $\mathfrak{P}$ be a height one prime of $\Lambda^-$ with $\mathfrak{P}\neq(p)$, and take a sequence $\mathfrak{P}_m$ of height one primes of $\Lambda^-$ with $\mathfrak{P}_m\to\mathfrak{P}$ as $m\to\infty$. Note that each such $\mathfrak{P}_m$ corresponds to a character $\alpha_m:\Gamma^-\rightarrow R_m^\times$ with $R_m$ a finite extension of $\Z_p$. By inductively choosing a sequence of Kolyvagin primes (of ``depth $k$'' for $k\gg 0$) using Cebotarev, one arrives at the inequality
\[
{\rm length}_{R_m}(M_{\mathfrak{P}_m})\leq{\rm length}_{R_m}\bigl(\check{S}(E/K_\infty)_{\mathfrak{P}_m}/R_m\cdot\kappa_{\infty,\mathfrak{P}_m}^{\rm Hg}\bigr)+E_m,
\]
where $E_m$ is an ``error term'' behaving asymptotically like ${\rm ord}_p(\alpha_m(\gamma)-\alpha_m^{-1}(\gamma))$ as $m\to \infty$.   
Thus $E_m=O(1)$ as long as $\mathfrak{P}\neq (\gamma-1)$, and hence by a control theorem in the style of Mazur--Rubin \cite{mazrub}, letting $\mathfrak{P}$ vary we deduce that 
the claimed divisibility holds in $\Lambda^-[1/p,1/(\gamma-1)]$. To handle the prime $\mathfrak{P}=(\gamma-1)$, one takes a sequence $\mathfrak{P}_m$ with $\alpha_m\equiv 1\pmod{p^m}$, and choosing a sequence of Kolyvagin primes as above, but this time exploiting the action of complex conjugation on  $(T_pE\otimes\alpha_m)/p^m$, a different induction argument 
yields the inequality
\[
{\rm length}_{R_m}(M_{\mathfrak{P}_m})\leq{\rm length}_{R_m}\bigl(\check{S}(E/K_\infty)_{\mathfrak{P}_m}/R_m\cdot\kappa_{\infty,\mathfrak{P}_m}^{\rm Hg}\bigr)+E_m,
\]
with an error term $E_m$ now bounded independently of $m$, which by a control theorem yields the desired divisibility also at the augmentation ideal $(\gamma-1)$.
\end{proof}

\begin{remark} For the application to the $p$-converse to the theorem of Gross--Zagier and Kolyvagin, it suffices to have the divisibility ``$\subset$'' in Theorem~\ref{thm:CGLS} (rather than the equality of characteristic ideals) after inverting $(\gamma-1)$ and $(p)$;  similarly, an ambiguity by powers of $(\gamma-1)$ is harmless for the application to the $p$-part of the BSD formula in analytic rank one.  However, the final from of the result of Theorem~\ref{thm:CGLS} obtained in \cite{CGS} is essential to the proof of Mazur's main conjecture at Eisenstein primes explained in the next lecture.
\end{remark}

\section{Lecture 3: Mazur's main conjecture at Eisenstein primes}

\subsection{Main result}

In this lecture we explain the proof of the following result from \cite{CGS}.

\begin{theorem}\label{thm:CGS}
Let $E/\Q$ be an elliptic curve of conductor $N$, and let $p\nmid 2N$ be a good Eisenstein prime for $E$, i.e. such that
\[
E[p]^{ss}\simeq\F_p(\phi)\oplus\F_p(\psi)
\]
for characters $\phi,\psi=\omega\phi^{-1}:G_{\Q}\rightarrow\F_p^\times$. 
Assume that $\phi\vert_{G_p}\neq 1,\omega$.  
Then Mazur's main conjecture (Conjecture~\ref{conj:mazur-IMC}) holds for $E$.
\end{theorem}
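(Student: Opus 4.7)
The plan is to mirror the two-step template of the proof of Theorem~\ref{thm:CGLS}: first match Iwasawa $\mu$- and $\lambda$-invariants on both sides of Conjecture~\ref{conj:mazur-IMC} via the Greenberg--Vatsal Eisenstein-congruence method, and then upgrade this equality of invariants to an equality of principal ideals by feeding in Theorem~\ref{thm:CGLS} through an auxiliary imaginary quadratic field.

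For Step~1, I would fix a finite set $\Sigma$ of primes of $\Q$ containing $p$, $\infty$, and the primes dividing $N$, and pass to the $\Sigma$-imprimitive Selmer group $X_\Sigma(E/\Q_\infty)$ and $p$-adic $L$-function $\mathcal{L}_{p,\Sigma}^{\rm MSD}(E/\Q)$. On the algebraic side, applying the short exact sequence \eqref{eq:Eis-congr-ses} exactly as in Proposition~\ref{prop:Iw-inv-alg} reduces the computation of $X_\Sigma(E/\Q_\infty)[p]$ to the residual Selmer groups of $\phi$ and $\psi$ along the cyclotomic $\Z_p$-extension of $\Q$; combined with Iwasawa's classical main conjecture for Dirichlet characters (Mazur--Wiles) and the Ferrero--Washington theorem, this yields $\mu(X_\Sigma(E/\Q_\infty)) = 0$ and an exact formula $\lambda(X_\Sigma(E/\Q_\infty)) = \lambda_\phi^\Sigma + \lambda_\psi^\Sigma$. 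The hypothesis $\phi|_{G_p}\neq 1,\omega$ plays the same role here as in Proposition~\ref{prop:Iw-inv-alg}, ensuring that the residual Selmer groups attached to $\Phi,\Psi$ compute the $p$-torsion of the integral character Selmer groups. On the analytic side, the Eisenstein congruence $f\equiv E_{\phi,\psi}\pmod{p}$ combined with the modular-symbol construction of $\mathcal{L}_p^{\rm MSD}(E/\Q)$ produces a congruence
\[
\mathcal{L}_{p,\Sigma}^{\rm MSD}(E/\Q)\equiv \mathcal{L}_{p,\Sigma}^{\rm KL}(\Phi)\cdot\mathcal{L}_{p,\Sigma}^{\rm KL}(\Psi)\pmod{p\,\Lambda}
\]
with the $\Sigma$-imprimitive Kubota--Leopoldt $p$-adic $L$-functions; Ferrero--Washington then gives $\mu(\mathcal{L}_{p,\Sigma}^{\rm MSD}(E/\Q))=0$, and Mazur--Wiles identifies its $\lambda$-invariant with $\lambda_\phi^\Sigma+\lambda_\psi^\Sigma$.

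For Step~2, I would pick an imaginary quadratic field $K$ of odd discriminant $-D_K\neq -3$ satisfying \eqref{eq:heeg}, \eqref{eq:spl}, $L(E^K,1)\neq 0$, and with $\phi|_{G_K}\neq 1,\omega$. By Kato's theorem $\mathrm{Sel}_{p^\infty}(E^K/\Q)$ is then finite, so Mazur's conjecture for $E^K$ reduces to an exact $T=0$ computation; via the factorization $L(E/K,s)=L(E,s)\,L(E^K,s)$ and its counterpart on the Iwasawa-theoretic side (Artin formalism for Iwasawa modules), Mazur's conjecture for $E$ becomes equivalent to the cyclotomic main conjecture for $E/K$. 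I would deduce the latter by establishing a two-variable main conjecture for $E/K$ along the $\Z_p^2$-extension of $K$: its anticyclotomic specialization is provided by Theorem~\ref{thm:CGLS}, the two-variable analogue of Step~1 matches Iwasawa invariants in the cyclotomic direction, and standard Poitou--Tate duality plus Mazur-style control theorems glue the two variables together, yielding the desired cyclotomic specialization over $K$ upon restriction.

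The main obstacle I anticipate lies in Step~2: extracting a genuine cyclotomic divisibility ${\rm char}_\Lambda(X(E/\Q_\infty))\supset(\mathcal{L}_p^{\rm MSD}(E/\Q))$ from the anticyclotomic BDP main conjecture requires careful bookkeeping of local conditions, Tamagawa-type Euler factors, and Greenberg local duality when transferring between the cyclotomic, anticyclotomic and two-variable Iwasawa pictures over $K$, together with a fine control at the augmentation ideal of the cyclotomic variable analogous to the one used in Proposition~\ref{prop:KS-1}. Once such a divisibility is in hand, the matching of $\mu$- and $\lambda$-invariants from Step~1 forces equality of characteristic ideals in $\Lambda$, completing the proof.
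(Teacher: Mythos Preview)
Your Step~1 contains a genuine gap that is precisely the obstruction Theorem~\ref{thm:CGS} is designed to overcome. You claim that the Greenberg--Vatsal argument over $\Q_\infty$ yields $\mu(X_\Sigma(E/\Q_\infty))=0$, but this is false in general: as the paper recalls immediately after the statement, Greenberg showed $\mu(X(E/\Q_\infty))>0$ whenever $\phi$ fails condition \eqref{eq:GV}, and Stevens showed the same for $\mu(\mathcal{L}_p^{\rm MSD}(E/\Q))$. The hypothesis $\phi\vert_{G_p}\neq 1,\omega$ does \emph{not} imply \eqref{eq:GV}, so half of the cases you need to cover have positive $\mu$. The reason the short exact sequence \eqref{eq:Sel-ses} works in Proposition~\ref{prop:Iw-inv-alg} is that the BDP local condition at $p$ is insensitive to how the ordinary filtration on $E[p]$ sits relative to the Eisenstein filtration \eqref{eq:Eis-congr-ses}; the ordinary local condition underlying $X(E/\Q_\infty)$ is not, and when the two filtrations disagree the induced residual Selmer group for one of the characters is infinite.

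Consequently your overall architecture is inverted relative to the paper. The cyclotomic divisibility you hope to extract in Step~2 is already supplied by Kato (integrally, via W\"uthrich), so only the equality of invariants \eqref{eq:Iw-inv-CGS} remains---and this is exactly what cannot be obtained by Eisenstein congruences over $\Q$. The paper instead passes to an imaginary quadratic $K$, twists by an anticyclotomic character $\alpha\equiv 1\pmod{p^M}$ chosen so that $\mathcal{L}_v^{\rm BDP}(f/K)(\alpha)\neq 0$, and uses Beilinson--Flach classes with their explicit reciprocity laws at $v$ and $\overline{v}$ to deduce from Theorem~\ref{thm:CGLS} the full $\alpha$-twisted cyclotomic main conjecture \eqref{eq:main-result} over $K_\infty^+$. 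The congruence $T_pE(\alpha)\equiv T_pE\pmod{p^M}$ then forces the $\mu$- and $\lambda$-invariants of the untwisted objects over $K_\infty^+$ to match---whatever their actual values---and one descends to $\Q$ via Kato. Your Step~2 gestures in this direction but omits the twist by $\alpha$ (essential, since \eqref{eq:heeg} forces $\mathcal{L}_p^{\rm PR}(E/K)^+(0)=0$) and the Beilinson--Flach input that links the BDP and Perrin-Riou $p$-adic $L$-functions.
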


Previously, the following results were known towards Conjecture~\ref{conj:mazur-IMC} for good Eisenstein primes $p$:
\begin{itemize}
    \item Rubin \cite{rubinmainconj}: proof in the CM case.
    \item Kato \cite{kato-ES}: $X(E/\Q_\infty)$ is $\Lambda$-torsion, with
    \[
    {\rm char}_\Lambda(X(E/\Q_\infty))=\bigl(\mathcal{L}_p^{\rm MSD}(E/\Q)\bigr)
    \]
    in $\Lambda[1/p]$.
    \item W\"uthrich \cite{wuthrich-int}: $\mathcal{L}_p^{\rm MSD}(E/\Q)$ is integral, and Kato's divisibility holds in $\Lambda$.
    \item Greenberg--Vatsal \cite{greenvats}: proof in ``half'' of the cases; more precisely, when
    \begin{equation}\label{eq:GV}\tag{GV}
    \begin{aligned}
    \phi=\begin{cases}\textrm{unramified at $p$ and odd, or}\\[0.2em]
    \textrm{ramified at $p$ and even;}
    \end{cases}
    \end{aligned}
    \end{equation}
    in other words, when $E[p^\infty]$ contains no cyclic subgroups of multiplicative type.
\end{itemize}
The condition on $\phi$ in the Greenberg--Vatsal result is needed to ensure the vanishing of $\mu(X(E/\Q_\infty))$ building on the work of Ferrero--Washington \cite{FW} and Mazur--Wiles \cite{mazur-wiles}.    
Without this restriction on $\phi$, it was shown by Greenberg \cite{greenberg-cetraro} that $\mu(X(E/\Q_\infty))$ is positive, and by work of Stevens \cite{stevens-invmath} one similarly knows that $\mu(\mathcal{L}_p^{\rm MSD}(E/\Q))>0$ when $\phi$ doesn't satisfy \eqref{eq:GV}. 

Thus to extend the  Greenberg--Vatsal method beyond the cases covered by \eqref{eq:GV} one is faced with the challenge of determining the exact value of the algebraic and analytic invariants, which seems to be a very difficult problem (but see \cite{bellaiche-pollack} and \cite{pollack-wake} for interesting recent works in this direction). 

The proof of Theorem~\ref{thm:CGS} is based on a different method to compare Iwasawa invariants. 
The method is insensitive to the value of $\mu$, and in particular gives a new proof of the Greenberg--Vatsal result in the cases they considered.

\subsection{Comparing Iwasawa invariants}

In this section we explain the strategy from \cite{CGS} to arrive at the equalities
\begin{equation}\label{eq:Iw-inv-CGS}
\mu(X(E/\Q_\infty))=\mu(\mathcal{L}_p^{\rm MSD}(E/\Q)),\quad
\lambda(X(E/\Q_\infty))=\lambda(\mathcal{L}_p^{\rm MSD}(E/\Q)),
\end{equation}
which combined with Kato's divisibility (as integrally refined by W\"uthrich \cite{wuthrich-int}) yields Theorem~\ref{thm:CGS}. Some of the details on how the strategy is carried out are given in the next subsection.

The following discussion applies to any prime $p\nmid 2N$ of good ordinary reduction for $E$. 
Let $K$ be an imaginary quadratic field satisfying \eqref{eq:spl}, and let $K_\infty^+$ be the cyclotomic $\Z_p$-extension of $K$. Following Greenberg \cite{greenberg-reps}, we define the ordinary Selmer group of $E$ over $K_\infty^+$ by
\[
{\rm Sel}_{p^\infty}(E/K_\infty^+):=\ker\biggr\{{\rm H}^1(K_\infty^+,E[p^\infty])\rightarrow\prod_{w\mid p}\frac{{\rm H}^1(K_{\infty,w}^+,E[p^\infty])}{A_w}\times\prod_{w\nmid p}{\rm H}^1(I_w,E[p^\infty])\biggr\},
\]
where $A_w:={\rm im}\{E^+[p^\infty]\rightarrow E[p^\infty]\}_{\rm div}$, with $E^+[p^\infty]$ the kernel of the reduction map at $p$, and $I_w\subset G_{K_{\infty,w}^+}$ the inertia subgroup at $w$. On the analytic side, Hida's $p$-adic Rankin method \cite{hida-measure-I} (as studied by Perrin-Riou \cite{PR-JLMS} in detail in the case of Rankin--Selberg convolution of $f$ with theta series of $K$) yields the construction of a $2$-variable $p$-adic $L$-function 
\[
\mathcal{L}_p^{\rm PR}(E/K)\in\Lambda_K:=\Z_p[[{\rm Gal}(K_\infty/K)]],
\]
where $K_\infty/K$ is the $\Z_p^2$-extension of $K$, interpolating  the algebraic part of the central $L$-values $L(f/K,\chi,1)$ (with a normalized period depending on $E$), as $\chi$ runs over the finite orders characters of $\Gamma_K$. 

The action of complex conjugation yields a decomposition $\Gamma_K\simeq\Gamma^+\times\Gamma^-$ into $\pm$-eigenspaces, with $\Gamma^+$ (resp. $\Gamma^-$) idenfitied with the Galois group of the cyclotomite (resp. anticyclotomic) $\Z_p$-extension of $K$.  
Denoting by $\mathcal{L}_p^{\rm PR}(E/K)^+$ the image of $\mathcal{L}_p^{\rm PR}(E/K)$ under the natural projection 
\[
\Lambda_K\rightarrow\Lambda^+:=\Z_p[[{\rm Gal}(K_\infty^+/K)]]\simeq\Lambda,
\]
Greenberg's Iwasawa Main Conjecture for general $p$-ordinary representations \cite{greenberg-reps}
predicts that for $\star\in\{+,\emptyset\}$, the Pontryagin dual $X(E/K_\infty^\star)={\rm Hom}_{\Z_p}({\rm Sel}_{p^\infty}(E/K_\infty^\star),\Q_p/\Z_p)$ is $\Lambda^\star$-torsion, with 
\begin{equation}\label{eq:PR+}
{\rm char}_{\Lambda^\star}(X(E/K_\infty^\star))\overset{?}=\bigl(\mathcal{L}_p^{\rm PR}(E/K)^\star\bigr).
\end{equation}

As a motivation for the general argument, we 
note that the aforementioned results, together with Theorem~\ref{thm:CGLS}, already imply a proof of this conjecture in some cases. Indeed,  denote by $E^K$ the twist of $E$ by the quadratic character corresponding to $K$. Kato's integral divisibility towards  Conjecture~\ref{conj:mazur-IMC} for $E$ and $E^K$ yields the divisibility 
\begin{equation}\label{eq:kato-K}
{\rm char}_{\Lambda^+}(X(E/K_\infty^+))\supset \bigl(\mathcal{L}_p^{\rm PR}(E/K)^+\bigr),
\end{equation}
while from Theorem~\ref{thm:CGLS} and the fact that $K_\infty^-\cap K_\infty^+=K$ one can show the equality up to a $p$-adic unit
\begin{equation}\label{eq:0}
\mathcal{F}(E/K_\infty^+)(0)\sim_p\mathcal{L}_p^{\rm PR}(E/K)^+(0),
\end{equation}
where $\mathcal{F}(E/K_\infty^+)\in\Lambda^+$ is any characteristic power series for $X(E/K_\infty^+)$.  
It is easy to see that the combination of \eqref{eq:kato-K} and \eqref{eq:0} implies \eqref{eq:PR+}, and hence Conjecture~\ref{conj:mazur-IMC}, \emph{provided} $\mathcal{L}_p^{\rm PR}(E/K)^+(0)\neq 0$. Unfortunately, hypothesis \eqref{eq:heeg} forces this value to vanish for sign reasons. Using Beilinson--Flach classes and their explicit reciprocity laws (as described in more detail in the next subsection), the same conclusion applies \emph{provided} $\mathcal{L}_v^{\rm BDP}(E/K)(0)\neq 0$, which by the main result of \cite{BDP} amounts to the requirement that the Heegner point $y_K\in E(K)$ is non-torsion.

To treat the general case, the idea is to take an anticyclotomic character 
\[
\alpha:\Gamma^-\rightarrow\Z_p^\times
\]
with $\alpha\equiv 1\pmod{p^M}$, for some $M\gg 0$ to stay away from any problematic zeroes; in particular, so that $\mathcal{L}_v^{\rm BDP}(E/K)(\alpha)\neq 0$. From a refinement \cite{BST} of 
the Beilinson--Flach classes constructed by Lei--Loeffler--Zerbes \cite{LLZ-RS,LLZ-K} and Kings--Loeffler--Zerbes \cite{KLZ-AJM,explicit} (in particular allowing one of the forms used in the construction to be a residually reducible and $p$-indistinguished Hida family with CM by $K$), 
and their explicit reciprocity laws, one can deduce from Theorem~\ref{thm:CGLS} a proof of the $\alpha$-twisted variant of conjecture \eqref{eq:PR+} for $K_\infty^+/K$:
\begin{equation}\label{eq:main-result}
{\rm char}_{\Lambda^+}(X(E(\alpha)/K_\infty^+))\overset{?}=\bigl(\mathcal{L}_p^{\rm PR}(E(\alpha)/K)^+\bigr).
\end{equation}
Establishing \eqref{eq:main-result} for a suitable choice of $\alpha$ as above is the key to the proof of Theorem~\ref{thm:CGS}, since from the easy congruences 
\begin{align*}
{\rm char}_{\Lambda^+}(X(E(\alpha)/K_\infty^+))&\equiv{\rm char}_{\Lambda^+}(X(E/K_\infty^+))\pmod{p^M},\\
\mathcal{L}_p^{\rm PR}(E(\alpha)/K)^+&\equiv\mathcal{L}_p^{\rm PR}(E/K)^+\pmod{p^M},
\end{align*}
it implies the equalities 
\[
\mu(X(E/K_\infty^+))=\mu(\mathcal{L}_p^{\rm PR}(E/K)^+),\quad
\lambda(X(E/K_\infty^+))=\lambda(\mathcal{L}_p^{\rm PR}(E/K)^+)
\]
(in particular, without knowing the specific value of the $\mu$-invariants!). 
Together with the integral divisibility \eqref{eq:kato-K}, these equalities yield the proof of conjecture \eqref{eq:PR+} for $K_\infty^+/K$, from where the proof of Theorem~\ref{thm:CGS} can be deduced from Kato's work.

\subsection{From anticyclotomic to cyclotomic}

It remains to outline the proof of \eqref{eq:main-result}. 

Since Conjecture~\ref{conj:mazur-IMC} is known to be isogeny invariant, we replace $E$ by the elliptic curve $E_\bullet/\Q$ is the same isogeny class constructed by W\"uthrich \cite{wuthrich-int}. This can be characterized as the elliptic curve whose $p$-adic Tate module $T_pE_\bullet$ agrees with the geometric lattice in the $p$-adic representation $V_f$ realized as the maximal quotient of ${\rm H}^1_{\rm et}(Y_1(N)_{\overline{\Q}},\Q_p(1))$  on which the Hecke operators acts with the same eigenvalues as $f$.

Let ${\rm H}^1_{\rm Iw}(K_\infty,T_pE_\bullet)$ be the Iwasawa cohomology for the $\Z_p^2$-extension $K_\infty/K$, which by Shapiro's lemma can be identified with ${\rm H}^1(K,T_pE_\bullet\hat\otimes_{\Z_p}\Lambda_K)$. By the work of Lei--Loeffler--Zerbes and Kings--Loeffler--Zerbes, as refined in the case of interest in recent work of Burungale--Skinner--Tian--Wan, there exists a class
\[
{\rm BF}_\alpha\in{\rm H}_{\rm Iw}^1(K_\infty,T_pE_\bullet(\alpha))
\]
together with two explicit reciprocity laws:
\begin{itemize}
\item[(1)] At the prime $v$, the class ${\rm BF}_\alpha$ naturally lands in the subspace ${\rm H}^1(K_{v},T_p^+E_\bullet(\alpha))$ and there is a generalized Coleman power series map
\[
{\rm Col}_{v}:{\rm H}^1_{\rm Iw}(K_{\infty,v},T_p^+E_\bullet(\alpha))\hookrightarrow\Z_p^{\rm ur}\hat\otimes_{\Z_p}\Lambda_K
\]
sending ${\rm res}_{v}({\rm BF}_\alpha)$ to $\mathcal{L}_v^{\rm Gr}(f(\alpha)/K)$, where $\mathcal{L}_v^{\rm Gr}(f(\alpha)/K)$ is a two-variable Rankin--Selberg $p$-adic $L$-function with the property that its natural image  $\mathcal{L}_v^{\rm Gr}(f(\alpha)/K)^-$ in $\Lambda^{\rm ur}$ satisfies (as can be checked by comparing their respective interpolation properties)
\[
\bigl(\mathcal{L}_v^{\rm Gr}(f(\alpha)/K)^-\bigr)=\bigl(\mathcal{L}_v^{\rm BDP}(f(\alpha)/K)^2\bigr),
\]
where $\mathcal{L}_v^{\rm BDP}(f(\alpha)/K)$ is the twist of $\mathcal{L}_v^{\rm BDP}(f(\alpha)/K)$ by the anticyclotomic character $\alpha$.
\item[(2)] At the prime $\overline{v}$, there is a generalized Coleman power series map
\[
{\rm Col}_{\overline{v}}:{\rm H}_{\rm Iw}^1(K_{\infty,\overline{v}},T_p^-E_\bullet(\alpha))\hookrightarrow\Lambda_K,
\]
where $T_p^-E_\bullet(\alpha):=T_pE_\bullet(\alpha)/T_p^+E_\bullet(\alpha)$, sending the natural image of ${\rm BF}_\alpha$ to $\mathcal{L}_p^{\rm PR}(E(\alpha)/K)$.
\end{itemize}

The cyclotomic projection ${\rm BF}_\alpha^+\in{\rm H}^1_{\rm Iw}(K_\infty^+,T_pE_\bullet(\alpha))$ is the base class of a cyclotomic Euler system for $T_pE_\bullet(\alpha)$, and for a suitable choice of $\alpha$ it can be shown to be nonzero as a consequence of Rohrlich's nonvanishing results \cite{rohrlich-cyc} and the second of the above explicit reciprocity laws. By the Euler system machinery \cite{rubin-ES}, 
one thus obtains that a certain dual Selmer group $X_{{\rm ord},{\rm str}}(E_\bullet(\alpha)/K_\infty^+)$ (dual to the compact Selmer group ${\rm Sel}_{{\rm ord},{\rm rel}}(K_\infty^+,T_pE_\bullet(\alpha))$ on which the class ${\rm BF}_\alpha^+$ lives) is $\Lambda^+$-torsion, with characteristic ideal satisfying the divisibility
\[
{\rm char}_{\Lambda^+}\bigl(X_{{\rm ord},{\rm str}}(E_\bullet(\alpha)/K_\infty^+)\bigr)\supset{\rm char}_{\Lambda^+}\biggl(\frac{{\rm Sel}_{{\rm ord},{\rm rel}}(K_{\infty}^+,T_pE_\bullet(\alpha))}{\Lambda^+\cdot{\rm BF}_\alpha^+}\biggr)
\]
in $\Lambda^+[1/p]$. By the commutative hexagon deduced from  Poitou--Tate duality:

\begin{center}
\begin{tikzcd}
[row sep=large]
&\frac{{\rm H}^1_{\rm Iw}(K_{\infty,\overline{v}}^+,T_p^-E_\bullet(\alpha))}{\Lambda^+\cdot{\rm res}_{\overline{v}}({\rm BF}_\alpha^+)} \ar[r] & X(E_\bullet(\alpha)/K_\infty^+)\ar[twoheadrightarrow]{dr} &
\\ \frac{{\rm Sel}_{{\rm ord},{\rm rel}}(K_{\infty}^+,T_p^-E_\bullet(\alpha))}{\Lambda^+\cdot{\rm BF}_\alpha^+} \ar[hookrightarrow]{ur}{{\rm res}_{\overline{v}}}\ar[hookrightarrow]{dr}{{\rm res}_{{v}}} & & & X_{{\rm ord},{\rm str}}(E_\bullet(\alpha)/K_\infty^+) \\ & \frac{{\rm H}^1_{\rm Iw}(K_{\infty,v}^+,T_p^+E_\bullet(\alpha))}{\Lambda^+\cdot{\rm res}_{v}({\rm BF}_\alpha^+)} \ar[r] & X_v(E_\bullet(\alpha)/K_\infty^+) \ar[twoheadrightarrow]{ur} &
\end{tikzcd}
\end{center}
this translates into the divisibilities
\begin{equation}\label{eq:cyc-PR}
{\rm char}_{\Lambda^+}(X(E_\bullet(\alpha)/K_\infty^+))\supset{\rm char}_{\Lambda^+}\biggl(\frac{{\rm H}^1_{\rm Iw}(K_{\infty,\overline{v}}^+,T_p^-E_\bullet(\alpha))}{\Lambda^+\cdot{\rm res}_{\overline{v}}({\rm BF}_\alpha^+)}\biggr)
=\bigl(\mathcal{L}_p^{\rm PR}(E_\bullet(\alpha)/K)^+\bigr)
\end{equation}
with the equality following from the explicit reciprocity law at $\overline{v}$ (using that ${\rm Col}_{\overline{v}}$ 
has pseudo-null cokernel),
and
\begin{equation}\label{eq:cyc-Gr}
{\rm char}_{\Lambda^+}(X_v(E_\bullet(\alpha)/K_\infty^+))\tilde{\Lambda}^{+}\supset{\rm char}_{\Lambda^+}\biggl(\frac{{\rm H}^1_{\rm Iw}(K_{\infty,{v}}^+,T_p^+E_\bullet(\alpha))}{\Lambda^+\cdot{\rm res}_{{v}}({\rm BF}_\alpha^+)}\biggr)\tilde{\Lambda}^{+}
=\bigl(\mathcal{L}_v^{\rm Gr}(E_\bullet(\alpha)/K)^+\bigr),
\end{equation}
similarly using the explicit reciprocity law at $v$. 
Further choosing $\alpha$ so that $\mathcal{L}_v^{\rm BDP}(f/K)(0)\neq 0$ (as is possible by the nonvanishing of $\mathcal{L}_v^{\rm BDP}(f/K)$ as an element in $\Lambda^{\rm ur}$), we deduce from Theorem~\ref{thm:CGS} that both sides of the divisibility \eqref{eq:cyc-Gr} agree at $T=0$ and are nonzero, hence they are equal. From the commutative hexagon, it follows that the divisibility in \eqref{eq:cyc-PR} is also an equality, concluding the proof of \eqref{eq:main-result}. 


\bibliography{references}
\bibliographystyle{alpha}

\end{document}